\crefname{mp}{problem}{problems}
\newcommand{\x}{\mathbf{x}}
\newcommand{\p}{\mathbf{p}}
\newcommand{\w}{\mathbf{w}}
\newcommand{\R}[1]{\mathbb{R}^{#1}}
\newcommand{\N}{\mathcal{N}}
\newcommand{\J}{\mathcal{J}}
\newcommand{\interior}[1]{\mathrm{Int} #1}
\newcommand{\closed}[1]{\mkern 1.5mu\overline{\mkern-1.5mu#1\mkern-1.5mu}\mkern 1.5mu}
\DeclareMathOperator*{\argmin}{\arg\!\min}
\begin{document}

\title{Mathematical Program Networks%\thanks{Grants or other notes
%about the article that should go on the front page should be
%placed here. General acknowledgments should be placed at the end of the article.}
}
% \subtitle{And \emph{Pieces}, the solver}

%\titlerunning{Short form of title}        % if too long for running head

\author{Forrest Laine
}

%\authorrunning{Short form of author list} % if too long for running head

\institute{Forrest Laine \at
           Computer Science Department \\
           Vanderbilt University \\
           Nashville, TN, USA \\
              % first address \\
              % Tel.: +1-360-305-5642\\
              % Fax: +123-45-678910\\
              \email{forrest.laine@vanderbilt.edu}           %  \\
%             \emph{Present address:} of F. Author  %  if 
}

% \date{Received: date / Accepted: date}
% The correct dates will be entered by the editor
\date{April 23, 2024}

\maketitle
\begin{abstract}

Mathematical Program Networks (MPNs) are introduced in this work. An MPN is a collection of interdependent Mathematical Programs (MPs) which are to be solved simultaneously, while respecting the connectivity pattern of the network defining their relationships. The network structure of an MPN impacts which decision variables each constituent mathematical program can influence, either directly or indirectly via solution graph constraints representing optimal decisions for their decedents. Many existing problem formulations can be formulated as MPNs, including Nash Equilibrium problems, multilevel optimization problems, and Equilibrium Programs with Equilibrium Constraints (EPECs), among others. The equilibrium points of an MPN correspond with the equilibrium points or solutions of these other problems. By thinking of a collection of decision problems as an MPN, a common definition of equilibrium can be used regardless of relationship between problems, and the same algorithms can be used to compute solutions. The presented framework facilitates modeling flexibility and analysis of various equilibrium points in problems involving multiple mathematical programs. 

\keywords{Multilevel \and Equilibrium \and Mathematical Programming \and Game Theory}
% \PACS{PACS code1 \and PACS code2 \and more}
% \subclass{MSC code1 \and MSC code2 \and more}
\end{abstract}

\newpage

\expandafter\newcommand\csname graph1\endcsname{%
\begin{tikzpicture}[scale=0.5,baseline=(current bounding box.center)]
\node[circle,draw,fill=yellow!70] (1) at (-1.5,0.0) {};
\node[circle, draw] (2) at (-0.5,0.0) {};
\node[circle, draw] (3) at (0.5,0.0) {};
\node[circle, draw] (4) at (1.5,0.0) {};
\graph {};
\end{tikzpicture}
}
\expandafter\newcommand\csname graph2\endcsname{%
\begin{tikzpicture}[scale=0.5,baseline=(current bounding box.center)]
\node[circle,draw,fill=yellow!70] (1) at (-1.0,0.0) {};
\node[circle, draw] (2) at (0.0,-1.0) {};
\node[circle, draw] (3) at (0.0,0.0) {};
\node[circle, draw] (4) at (1.0,0.0) {};
\graph {(1) -> (2); };
\end{tikzpicture}
}
\expandafter\newcommand\csname graph3\endcsname{%
\begin{tikzpicture}[scale=0.5,baseline=(current bounding box.center)]
\node[circle,draw,fill=yellow!70] (1) at (-1.0,0.0) {};
\node[circle, draw] (2) at (0.0,0.0) {};
\node[circle, draw] (3) at (0.0,-1.0) {};
\node[circle, draw] (4) at (1.0,0.0) {};
\graph {(2) -> (3); };
\end{tikzpicture}
}
\expandafter\newcommand\csname graph4\endcsname{%
\begin{tikzpicture}[scale=0.5,baseline=(current bounding box.center)]
\node[circle,draw,fill=yellow!70] (1) at (0.0,-1.0) {};
\node[circle, draw] (2) at (-1.0,0.0) {};
\node[circle, draw] (3) at (0.0,0.0) {};
\node[circle, draw] (4) at (1.0,0.0) {};
\graph {(2) -> (1); };
\end{tikzpicture}
}
\expandafter\newcommand\csname graph5\endcsname{%
\begin{tikzpicture}[scale=0.5,baseline=(current bounding box.center)]
\node[circle,draw,fill=yellow!70] (1) at (-0.5,0.0) {};
\node[circle, draw] (2) at (-0.5,-1.0) {};
\node[circle, draw] (3) at (0.5,-1.0) {};
\node[circle, draw] (4) at (0.5,0.0) {};
\graph {(1) -> (2); (1) -> (3); };
\end{tikzpicture}
}
\expandafter\newcommand\csname graph6\endcsname{%
\begin{tikzpicture}[scale=0.5,baseline=(current bounding box.center)]
\node[circle,draw,fill=yellow!70] (1) at (-0.0,-0.5) {};
\node[circle, draw] (2) at (1.0,0.0) {};
\node[circle, draw] (3) at (2.0,0.0) {};
\node[circle, draw] (4) at (-0.0,0.5) {};
\graph {(1) -> (2); (2) -> (3); };
\end{tikzpicture}
}
\expandafter\newcommand\csname graph7\endcsname{%
\begin{tikzpicture}[scale=0.5,baseline=(current bounding box.center)]
\node[circle,draw,fill=yellow!70] (1) at (1.0,0.0) {};
\node[circle, draw] (2) at (2.0,0.0) {};
\node[circle, draw] (3) at (-0.0,-0.5) {};
\node[circle, draw] (4) at (-0.0,0.5) {};
\graph {(1) -> (2); (3) -> (1); };
\end{tikzpicture}
}
\expandafter\newcommand\csname graph8\endcsname{%
\begin{tikzpicture}[scale=0.5,baseline=(current bounding box.center)]
\node[circle,draw,fill=yellow!70] (1) at (-1.0,0.0) {};
\node[circle, draw] (2) at (0.0,-1.0) {};
\node[circle, draw] (3) at (0.0,0.0) {};
\node[circle, draw] (4) at (1.0,0.0) {};
\graph {(3) -> (2); (1) -> (2); };
\end{tikzpicture}
}
\expandafter\newcommand\csname graph9\endcsname{%
\begin{tikzpicture}[scale=0.5,baseline=(current bounding box.center)]
\node[circle,draw,fill=yellow!70] (1) at (-0.5,0.0) {};
\node[circle, draw] (2) at (-0.5,-1.0) {};
\node[circle, draw] (3) at (0.5,0.0) {};
\node[circle, draw] (4) at (0.5,-1.0) {};
\graph {(1) -> (2); (3) -> (4); };
\end{tikzpicture}
}
\expandafter\newcommand\csname graph10\endcsname{%
\begin{tikzpicture}[scale=0.5,baseline=(current bounding box.center)]
\node[circle,draw,fill=yellow!70] (1) at (-0.5,0.0) {};
\node[circle, draw] (2) at (0.5,0.0) {};
\node[circle, draw] (3) at (-0.5,-1.0) {};
\node[circle, draw] (4) at (0.5,-1.0) {};
\graph {(2) -> (4); (2) -> (3); };
\end{tikzpicture}
}
\expandafter\newcommand\csname graph11\endcsname{%
\begin{tikzpicture}[scale=0.5,baseline=(current bounding box.center)]
\node[circle,draw,fill=yellow!70] (1) at (0.5,-1.0) {};
\node[circle, draw] (2) at (-0.5,0.0) {};
\node[circle, draw] (3) at (-0.5,-1.0) {};
\node[circle, draw] (4) at (0.5,0.0) {};
\graph {(2) -> (1); (2) -> (3); };
\end{tikzpicture}
}
\expandafter\newcommand\csname graph12\endcsname{%
\begin{tikzpicture}[scale=0.5,baseline=(current bounding box.center)]
\node[circle,draw,fill=yellow!70] (1) at (2.0,0.0) {};
\node[circle, draw] (2) at (-0.0,-0.5) {};
\node[circle, draw] (3) at (1.0,0.0) {};
\node[circle, draw] (4) at (-0.0,0.5) {};
\graph {(3) -> (1); (2) -> (3); };
\end{tikzpicture}
}
\expandafter\newcommand\csname graph13\endcsname{%
\begin{tikzpicture}[scale=0.5,baseline=(current bounding box.center)]
\node[circle,draw,fill=yellow!70] (1) at (-0.0,-0.5) {};
\node[circle, draw] (2) at (-0.0,0.5) {};
\node[circle, draw] (3) at (1.0,0.0) {};
\node[circle, draw] (4) at (2.0,0.0) {};
\graph {(3) -> (4); (2) -> (3); };
\end{tikzpicture}
}
\expandafter\newcommand\csname graph14\endcsname{%
\begin{tikzpicture}[scale=0.5,baseline=(current bounding box.center)]
\node[circle,draw,fill=yellow!70] (1) at (0.5,-1.0) {};
\node[circle, draw] (2) at (-0.5,0.0) {};
\node[circle, draw] (3) at (-0.5,-1.0) {};
\node[circle, draw] (4) at (0.5,0.0) {};
\graph {(4) -> (1); (2) -> (3); };
\end{tikzpicture}
}
\expandafter\newcommand\csname graph15\endcsname{%
\begin{tikzpicture}[scale=0.5,baseline=(current bounding box.center)]
\node[circle,draw,fill=yellow!70] (1) at (-1.0,0.0) {};
\node[circle, draw] (2) at (0.0,0.0) {};
\node[circle, draw] (3) at (0.0,-1.0) {};
\node[circle, draw] (4) at (1.0,0.0) {};
\graph {(4) -> (3); (2) -> (3); };
\end{tikzpicture}
}
\expandafter\newcommand\csname graph16\endcsname{%
\begin{tikzpicture}[scale=0.5,baseline=(current bounding box.center)]
\node[circle,draw,fill=yellow!70] (1) at (0.0,-1.0) {};
\node[circle, draw] (2) at (-1.0,0.0) {};
\node[circle, draw] (3) at (0.0,0.0) {};
\node[circle, draw] (4) at (1.0,0.0) {};
\graph {(3) -> (1); (2) -> (1); };
\end{tikzpicture}
}
\expandafter\newcommand\csname graph17\endcsname{%
\begin{tikzpicture}[scale=0.5,baseline=(current bounding box.center)]
\node[circle,draw,fill=yellow!70] (1) at (0.0,0.0) {};
\node[circle, draw] (2) at (0.0,-1.0) {};
\node[circle, draw] (3) at (1.0,-1.0) {};
\node[circle, draw] (4) at (-1.0,-1.0) {};
\graph {(1) -> (2); (1) -> (3); (1) -> (4); };
\end{tikzpicture}
}
\expandafter\newcommand\csname graph18\endcsname{%
\begin{tikzpicture}[scale=0.5,baseline=(current bounding box.center)]
\node[circle,draw,fill=yellow!70] (1) at (-0.0,0.0) {};
\node[circle, draw] (2) at (1.0,-0.5) {};
\node[circle, draw] (3) at (1.0,0.5) {};
\node[circle, draw] (4) at (2.0,0.0) {};
\graph {(2) -> (4); (1) -> (2); (1) -> (3); };
\end{tikzpicture}
}
\expandafter\newcommand\csname graph19\endcsname{%
\begin{tikzpicture}[scale=0.5,baseline=(current bounding box.center)]
\node[circle,draw,fill=yellow!70] (1) at (1.0,0.0) {};
\node[circle, draw] (2) at (2.0,-0.5) {};
\node[circle, draw] (3) at (2.0,0.5) {};
\node[circle, draw] (4) at (-0.0,0.0) {};
\graph {(1) -> (2); (4) -> (1); (1) -> (3); };
\end{tikzpicture}
}
\expandafter\newcommand\csname graph20\endcsname{%
\begin{tikzpicture}[scale=0.5,baseline=(current bounding box.center)]
\node[circle,draw,fill=yellow!70] (1) at (-0.5,0.0) {};
\node[circle, draw] (2) at (-0.5,-1.0) {};
\node[circle, draw] (3) at (0.5,-1.0) {};
\node[circle, draw] (4) at (0.5,0.0) {};
\graph {(1) -> (2); (4) -> (2); (1) -> (3); };
\end{tikzpicture}
}
\expandafter\newcommand\csname graph21\endcsname{%
\begin{tikzpicture}[scale=0.5,baseline=(current bounding box.center)]
\node[circle,draw,fill=yellow!70] (1) at (-0.0,0.0) {};
\node[circle, draw] (2) at (1.0,0.0) {};
\node[circle, draw] (3) at (2.0,0.5) {};
\node[circle, draw] (4) at (2.0,-0.5) {};
\graph {(2) -> (4); (1) -> (2); (2) -> (3); };
\end{tikzpicture}
}
\expandafter\newcommand\csname graph22\endcsname{%
\begin{tikzpicture}[scale=0.5,baseline=(current bounding box.center)]
\node[circle,draw,fill=yellow!70] (1) at (-0.0,0.0) {};
\node[circle, draw] (2) at (1.0,0.0) {};
\node[circle, draw] (3) at (2.0,0.0) {};
\node[circle, draw] (4) at (3.0,0.0) {};
\graph {(1) -> (2); (3) -> (4); (2) -> (3); };
\end{tikzpicture}
}
\expandafter\newcommand\csname graph23\endcsname{%
\begin{tikzpicture}[scale=0.5,baseline=(current bounding box.center)]
\node[circle,draw,fill=yellow!70] (1) at (1.0,0.0) {};
\node[circle, draw] (2) at (2.0,0.0) {};
\node[circle, draw] (3) at (3.0,0.0) {};
\node[circle, draw] (4) at (-0.0,0.0) {};
\graph {(1) -> (2); (4) -> (1); (2) -> (3); };
\end{tikzpicture}
}
\expandafter\newcommand\csname graph24\endcsname{%
\begin{tikzpicture}[scale=0.5,baseline=(current bounding box.center)]
\node[circle,draw,fill=yellow!70] (1) at (-0.0,-0.5) {};
\node[circle, draw] (2) at (1.0,0.0) {};
\node[circle, draw] (3) at (2.0,0.0) {};
\node[circle, draw] (4) at (-0.0,0.5) {};
\graph {(1) -> (2); (4) -> (2); (2) -> (3); };
\end{tikzpicture}
}
\expandafter\newcommand\csname graph25\endcsname{%
\begin{tikzpicture}[scale=0.5,baseline=(current bounding box.center)]
\node[circle,draw,fill=yellow!70] (1) at (-0.0,0.0) {};
\node[circle, draw] (2) at (1.0,0.0) {};
\node[circle, draw] (3) at (2.0,0.0) {};
\node[circle, draw] (4) at (-0.0,1.0) {};
\graph {(1) -> (2); (4) -> (3); (2) -> (3); };
\end{tikzpicture}
}
\expandafter\newcommand\csname graph26\endcsname{%
\begin{tikzpicture}[scale=0.5,baseline=(current bounding box.center)]
\node[circle,draw,fill=yellow!70] (1) at (1.0,0.5) {};
\node[circle, draw] (2) at (2.0,0.0) {};
\node[circle, draw] (3) at (-0.0,0.0) {};
\node[circle, draw] (4) at (1.0,-0.5) {};
\graph {(1) -> (2); (3) -> (1); (3) -> (4); };
\end{tikzpicture}
}
\expandafter\newcommand\csname graph27\endcsname{%
\begin{tikzpicture}[scale=0.5,baseline=(current bounding box.center)]
\node[circle,draw,fill=yellow!70] (1) at (1.0,0.0) {};
\node[circle, draw] (2) at (2.0,0.0) {};
\node[circle, draw] (3) at (-0.0,-0.5) {};
\node[circle, draw] (4) at (-0.0,0.5) {};
\graph {(1) -> (2); (3) -> (1); (4) -> (1); };
\end{tikzpicture}
}
\expandafter\newcommand\csname graph28\endcsname{%
\begin{tikzpicture}[scale=0.5,baseline=(current bounding box.center)]
\node[circle,draw,fill=yellow!70] (1) at (1.0,0.0) {};
\node[circle, draw] (2) at (2.0,0.0) {};
\node[circle, draw] (3) at (-0.0,0.0) {};
\node[circle, draw] (4) at (-0.0,1.0) {};
\graph {(1) -> (2); (3) -> (1); (4) -> (2); };
\end{tikzpicture}
}
\expandafter\newcommand\csname graph29\endcsname{%
\begin{tikzpicture}[scale=0.5,baseline=(current bounding box.center)]
\node[circle,draw,fill=yellow!70] (1) at (2.0,0.0) {};
\node[circle, draw] (2) at (3.0,0.0) {};
\node[circle, draw] (3) at (1.0,0.0) {};
\node[circle, draw] (4) at (-0.0,0.0) {};
\graph {(1) -> (2); (3) -> (1); (4) -> (3); };
\end{tikzpicture}
}
\expandafter\newcommand\csname graph30\endcsname{%
\begin{tikzpicture}[scale=0.5,baseline=(current bounding box.center)]
\node[circle,draw,fill=yellow!70] (1) at (-0.5,0.0) {};
\node[circle, draw] (2) at (-0.5,-1.0) {};
\node[circle, draw] (3) at (0.5,0.0) {};
\node[circle, draw] (4) at (0.5,-1.0) {};
\graph {(3) -> (2); (1) -> (2); (3) -> (4); };
\end{tikzpicture}
}
\expandafter\newcommand\csname graph31\endcsname{%
\begin{tikzpicture}[scale=0.5,baseline=(current bounding box.center)]
\node[circle,draw,fill=yellow!70] (1) at (-1.0,0.0) {};
\node[circle, draw] (2) at (0.0,-1.0) {};
\node[circle, draw] (3) at (0.0,0.0) {};
\node[circle, draw] (4) at (1.0,0.0) {};
\graph {(3) -> (2); (1) -> (2); (4) -> (2); };
\end{tikzpicture}
}
\expandafter\newcommand\csname graph32\endcsname{%
\begin{tikzpicture}[scale=0.5,baseline=(current bounding box.center)]
\node[circle,draw,fill=yellow!70] (1) at (-0.0,1.0) {};
\node[circle, draw] (2) at (2.0,0.0) {};
\node[circle, draw] (3) at (1.0,0.0) {};
\node[circle, draw] (4) at (-0.0,0.0) {};
\graph {(3) -> (2); (1) -> (2); (4) -> (3); };
\end{tikzpicture}
}
\expandafter\newcommand\csname graph33\endcsname{%
\begin{tikzpicture}[scale=0.5,baseline=(current bounding box.center)]
\node[circle,draw,fill=yellow!70] (1) at (0.0,-1.0) {};
\node[circle, draw] (2) at (0.0,0.0) {};
\node[circle, draw] (3) at (1.0,-1.0) {};
\node[circle, draw] (4) at (-1.0,-1.0) {};
\graph {(2) -> (4); (2) -> (1); (2) -> (3); };
\end{tikzpicture}
}
\expandafter\newcommand\csname graph34\endcsname{%
\begin{tikzpicture}[scale=0.5,baseline=(current bounding box.center)]
\node[circle,draw,fill=yellow!70] (1) at (2.0,0.0) {};
\node[circle, draw] (2) at (-0.0,0.0) {};
\node[circle, draw] (3) at (1.0,0.5) {};
\node[circle, draw] (4) at (1.0,-0.5) {};
\graph {(2) -> (4); (3) -> (1); (2) -> (3); };
\end{tikzpicture}
}
\expandafter\newcommand\csname graph35\endcsname{%
\begin{tikzpicture}[scale=0.5,baseline=(current bounding box.center)]
\node[circle,draw,fill=yellow!70] (1) at (1.0,0.5) {};
\node[circle, draw] (2) at (-0.0,0.0) {};
\node[circle, draw] (3) at (1.0,-0.5) {};
\node[circle, draw] (4) at (2.0,0.0) {};
\graph {(2) -> (1); (3) -> (4); (2) -> (3); };
\end{tikzpicture}
}
\expandafter\newcommand\csname graph36\endcsname{%
\begin{tikzpicture}[scale=0.5,baseline=(current bounding box.center)]
\node[circle,draw,fill=yellow!70] (1) at (0.5,-1.0) {};
\node[circle, draw] (2) at (-0.5,0.0) {};
\node[circle, draw] (3) at (-0.5,-1.0) {};
\node[circle, draw] (4) at (0.5,0.0) {};
\graph {(4) -> (1); (2) -> (1); (2) -> (3); };
\end{tikzpicture}
}
\expandafter\newcommand\csname graph37\endcsname{%
\begin{tikzpicture}[scale=0.5,baseline=(current bounding box.center)]
\node[circle,draw,fill=yellow!70] (1) at (2.0,0.5) {};
\node[circle, draw] (2) at (1.0,0.0) {};
\node[circle, draw] (3) at (2.0,-0.5) {};
\node[circle, draw] (4) at (-0.0,0.0) {};
\graph {(4) -> (2); (2) -> (1); (2) -> (3); };
\end{tikzpicture}
}
\expandafter\newcommand\csname graph38\endcsname{%
\begin{tikzpicture}[scale=0.5,baseline=(current bounding box.center)]
\node[circle,draw,fill=yellow!70] (1) at (0.5,-1.0) {};
\node[circle, draw] (2) at (-0.5,0.0) {};
\node[circle, draw] (3) at (-0.5,-1.0) {};
\node[circle, draw] (4) at (0.5,0.0) {};
\graph {(4) -> (3); (2) -> (1); (2) -> (3); };
\end{tikzpicture}
}
\expandafter\newcommand\csname graph39\endcsname{%
\begin{tikzpicture}[scale=0.5,baseline=(current bounding box.center)]
\node[circle,draw,fill=yellow!70] (1) at (2.0,0.0) {};
\node[circle, draw] (2) at (-0.0,0.0) {};
\node[circle, draw] (3) at (1.0,0.0) {};
\node[circle, draw] (4) at (-0.0,1.0) {};
\graph {(3) -> (1); (4) -> (1); (2) -> (3); };
\end{tikzpicture}
}
\expandafter\newcommand\csname graph40\endcsname{%
\begin{tikzpicture}[scale=0.5,baseline=(current bounding box.center)]
\node[circle,draw,fill=yellow!70] (1) at (3.0,0.0) {};
\node[circle, draw] (2) at (1.0,0.0) {};
\node[circle, draw] (3) at (2.0,0.0) {};
\node[circle, draw] (4) at (-0.0,0.0) {};
\graph {(3) -> (1); (4) -> (2); (2) -> (3); };
\end{tikzpicture}
}
\expandafter\newcommand\csname graph41\endcsname{%
\begin{tikzpicture}[scale=0.5,baseline=(current bounding box.center)]
\node[circle,draw,fill=yellow!70] (1) at (2.0,0.0) {};
\node[circle, draw] (2) at (-0.0,-0.5) {};
\node[circle, draw] (3) at (1.0,0.0) {};
\node[circle, draw] (4) at (-0.0,0.5) {};
\graph {(3) -> (1); (4) -> (3); (2) -> (3); };
\end{tikzpicture}
}
\expandafter\newcommand\csname graph42\endcsname{%
\begin{tikzpicture}[scale=0.5,baseline=(current bounding box.center)]
\node[circle,draw,fill=yellow!70] (1) at (0.0,-1.0) {};
\node[circle, draw] (2) at (-1.0,0.0) {};
\node[circle, draw] (3) at (0.0,0.0) {};
\node[circle, draw] (4) at (1.0,0.0) {};
\graph {(3) -> (1); (4) -> (1); (2) -> (1); };
\end{tikzpicture}
}
\expandafter\newcommand\csname graph43\endcsname{%
\begin{tikzpicture}[scale=0.5,baseline=(current bounding box.center)]
\node[circle,draw,fill=yellow!70] (1) at (-0.0,0.0) {};
\node[circle, draw] (2) at (1.0,-0.5) {};
\node[circle, draw] (3) at (1.0,0.5) {};
\node[circle, draw] (4) at (2.0,0.0) {};
\graph {(2) -> (4); (1) -> (2); (1) -> (3); (3) -> (4); };
\end{tikzpicture}
}
\expandafter\newcommand\csname graph44\endcsname{%
\begin{tikzpicture}[scale=0.5,baseline=(current bounding box.center)]
\node[circle,draw,fill=yellow!70] (1) at (-0.5,0.0) {};
\node[circle, draw] (2) at (-0.5,-1.0) {};
\node[circle, draw] (3) at (0.5,-1.0) {};
\node[circle, draw] (4) at (0.5,0.0) {};
\graph {(1) -> (2); (4) -> (2); (1) -> (3); (4) -> (3); };
\end{tikzpicture}
}
\expandafter\newcommand\csname graph45\endcsname{%
\begin{tikzpicture}[scale=0.5,baseline=(current bounding box.center)]
\node[circle,draw,fill=yellow!70] (1) at (1.0,0.5) {};
\node[circle, draw] (2) at (2.0,0.0) {};
\node[circle, draw] (3) at (-0.0,0.0) {};
\node[circle, draw] (4) at (1.0,-0.5) {};
\graph {(1) -> (2); (3) -> (1); (4) -> (2); (3) -> (4); };
\end{tikzpicture}
}
\expandafter\newcommand\csname graph46\endcsname{%
\begin{tikzpicture}[scale=0.5,baseline=(current bounding box.center)]
\node[circle,draw,fill=yellow!70] (1) at (2.0,0.0) {};
\node[circle, draw] (2) at (-0.0,0.0) {};
\node[circle, draw] (3) at (1.0,0.5) {};
\node[circle, draw] (4) at (1.0,-0.5) {};
\graph {(2) -> (4); (3) -> (1); (4) -> (1); (2) -> (3); };
\end{tikzpicture}
}
\expandafter\newcommand\csname graph47\endcsname{%
\begin{tikzpicture}[scale=0.5,baseline=(current bounding box.center)]
\node[circle,draw,fill=yellow!70] (1) at (0.5,-1.0) {};
\node[circle, draw] (2) at (-0.5,0.0) {};
\node[circle, draw] (3) at (-0.5,-1.0) {};
\node[circle, draw] (4) at (0.5,0.0) {};
\graph {(4) -> (1); (4) -> (3); (2) -> (1); (2) -> (3); };
\end{tikzpicture}
}

% \section{Introduction}
% \label{sec:intro}
% \input{intro.tex}

\section{Introduction}
\label{sec:intro}
This article aims to produce a standardized framework for describing the relationships between multiple interdependent Mathematical Programs (a.k.a. optimization problems). Mathematical Programs can be viewed as formalism of a decision-making problem. A Mathematical Program Network is therefore a network of decision problems. In the study of multiple interacting decision-makers, the notion of equilibrium is central, i.e. the sets of decisions which are mutually acceptable for all parties. 

There is a robust field of work dedicated to studying problems involving multiple decisions, dating back at least to the early 19th century when Antoine Cournot was developing his theory of competition \cite{cournot1897researches}. Problems considering hierarchical relationships between decision-makers have been considered at least since Heinrich Stackelberg studied them in the early 1930s \cite{von2010market}. Throughout the mid to late 20th century, many researchers continued to build on these early foundations. The field has proved existence and uniqueness theorems for equilibria to various collections of decision problems, explored applications related to them, and derived algorithms to solve for such solutions. 

Nevertheless, the vast majority of existing research has been on relatively specific combinations of Mathematical Programs, focusing on variants of bilevel programs, static (Nash) games, single-leader/multi-follower problems, and equilibrium problems with equilibrium constraints or multi-leader/multi-follower problems. Comparatively, the amount of work on more elaborate networks of programs like multilevel optimization or multilevel games has been far less (``multi'' here meaning more than two). Presumably this is due to the understood complexity of multilevel problems, which as one might expect, increases in the number of levels. 

There have been some notable works which have nonetheless attempted to study and develop solution approaches for deeply hierarchical decision problems. Methods for solving trilevel linear optimization problems were developed in \cite{Bard:1984:ILT,Wen:1986:HAS}. Others studied the computational complexity of multilevel problems, proving that $(L+1)$-level linear problems are at least as hard as level $L$ of the polynomial time hierarchy \cite{Jeroslow:1985:PHS,Blair:1992:CCM}. Conditions required for the boundedness of multilevel linear problems were found in \cite{Benson:1989:SPL}. Necessary conditions for solutions to multilevel linear problems were described in \cite{ruan2004optimality}, in which interesting properties of the feasible set of solutions to these problems were also described. A thorough list of references related to multilevel optimization was collected in \cite{vicente1994bilevel}. Despite some of this analysis, none of these early approaches developed a general method for solving linear multilevel ($L>3$) optimization problems.

Recently, others have extended these early results to more general formulations. The work in \cite{faisca2009multi} attempted to develop a general method for solving multilevel quadratic problems, although the methods derived produce incorrect results in non-trivial settings. An evolutionary approach for general, unconstrained multilevel problems was developed in \cite{tilahun2012new}, and a related gradient-based approach proposed in \cite{sato2021gradient}. Both of these works take an empirical approach, and show that their approaches perform adequately in practice on some tasks, although they are relatively inefficient. Most recently, \cite{shafiei2021trilevel} presented convergence results for a method solving trilevel and multilevel problems via monotone operator theory. But like previous techniques, it requires that each level is an unconstrained decision problem, and assumes a specific structure on the objective functions of the decisions. In the realm of multilevel equilibrium problems, an approach for solving for Feedback Nash equilibria in repeated games was established in \cite{laine2021computation}, which was altered to consider Feedback Stackelberg equilibria in \cite{li2024computation}. However, both methods assume strict complementarity holds at every stage of the game, which is a strong assumption that can't be established prior to solving. 

Viewing the existing literature as a whole, it is clear there still exists a large gap in methods capable of finding equilibria for general networks of decision problem. Existing techniques for multilevel problems ($L>2$) assume either a single decision at each level, or assume a repeated game structure, strongly linking the decision nodes across levels of the network. More importantly, existing methods either assume that decision nodes do not involve constraints, or they do not properly account for the influence of said constraints on other decision problems in the network. Yet even so, it is not the lack of solution methods that is the biggest deficiency in the literature (at least not primarily)---rather, it is the absence of a formalism unifying the various problems involving multiple mathematical programs. Game Theory, i.e.  ``the study of mathematical models of conflict and cooperation between rational decision-makers''\cite{myerson2013game}, of course provides formalisms to describe relationships between rational decision-makers. A Mathematical Program Network, as defined here, could be thought of as a perfect-information extensive form game played over continuous strategy spaces, where the notion of equilibrium for these networks corresponds to a subgame perfect equilibrium \cite{selten1962a,selten1965b}. However, the literature is sparse with regards to these continuous games, especially those which involve non-trivial constraints on the strategy spaces of each player, or with non-isolated equilibria of the considered subgames. Whats more, except for certain cases like studies of duopolies \cite{daughety1990beneficial} or dynamic games \cite{bacsar1998dynamic}, existing Game Theory frameworks don't facilitate easy decoupling of the players in the game and the information pattern of the game. Considering nodes as players and edges as the underpinning of the information pattern, this distinction is made explicit in MPNs. 

The objective of this work is to formalize a framework for thinking about networks of Mathematical Programs. This framework should generalize the standard formulations such as Generalized Nash Equilibrium Problems \cite{debreu1952social,arrow1954existence,facchinei2007generalized}, Bilevel optimization problems \cite{bracken1973mathematical,vicente1994bilevel}, Mathematical Programs with Equilibrium Constraints \cite{outrata1994optimization}, Multi-Leader-Multi-Follower games \cite{sherali1984multiple}, and Feedback Nash Equilibrium problems \cite{laine2023computation}. This framework should provide a definition of equilibrium which aligns with the accepted notion of equilibrium for each of these special instances. Just as importantly as generalizing existing formulations, this framework should enable one to easily imagine alternate networks of decision problems which otherwise don't fit into the preexisting paradigms. Such a framework would enable researchers and practitioners to treat the relationship between decision-makers as a modeling choice just as the loss functions and feasible sets for each decision problem are modeling choices. In the study of duopolies, works such as \cite{bard1982explicit,daughety1990beneficial,huck2001stackelberg} considered the difference between modeling two firms in a Cournot vs Stackelberg competition. Ideally the proposed framework would enable a similar analysis and model exploration to occur for larger, more intricate collections of decision-makers.

Mathematical Program Networks, the topic of this article, aims to be such a framework. In the following sections, the case will be made that MPNs meaningfully fill the gap that is outlined above. In \cref{sec:notation}, preliminary notation that will be used throughout is outlined. In \cref{sec:formulation}, formal definitions of Mathematical Program Networks and their associated equilibrium points are provided. Key results are developed in \cref{sec:dev}, which will inspire the computational routines for finding equilibria presented in \cref{sec:comp}. Some examples of how MPNs can be used to model interesting problems are explored in \cref{sec:comp_examples}, including a large-scale empirical analysis on the impact network structure has on the cost of various equilibrium solutions. Concluding remarks are made in \cref{sec:conclusion}.

% Furthermore, in addition to the stated gap, the organization of existing mathematical programs into the framework of NDPs creates completely new opportunities to advance knowledge. Specifically, opportunities to relate decision nodes in previously unexplored ways becomes very natural. For example, all of the formulations explored by the works described in this section have an underlying decision network which can be described as a directed acyclic graph without skip connections. By considering cyclic graphs, interesting notions of equilibria can be constructed, likely with strong connections to those from conjectural variation games \cite{perry1982oligopoly}. Undoubtedly many interesting intellectual curiosities will emerge throughout the study of NDPs, inspiring new research endeavors for years to come. 

\section{Basic Notation}
\label{sec:notation}
For some integer $k$, let $[k]$ define the index set $\{1,\hdots, k\}$. For some vector $\x\in\mathbb{R}^n$, $x_j$ indicates the $j$th element of $\x$. For some index set $J \subset [n]$, let $[x_j]_{j\in J}$ denote the vector comprised of the elements of $\x$ selected by the indices in $J$. If $\x^j$, $j \in J$ is some collection of vectors, then $[\x^j]_{j\in J}$ denotes the concatenation of all vectors in that collection. For an index set $J$, $\mathcal{P}(J)$ is the set of all subsets of $J$ (power set). 

For some set $S \subset \R{n}$, the closure of $S$ is denoted $\bar{S}$, the complement of $S$ ($\R{n} \setminus S$) is denoted $S'$, the interior of $S$ ($\R{n} \setminus \closed{(\R{n} \setminus S)}$) is $\interior{S}$, and the boundary of $S$ ($\closed{S} \setminus \interior{S}$) is denoted $\partial S$. The dual cone of $S$ is denoted $S^* := \{\mathbf{v}\in\R{n} : \langle \mathbf{v},\mathbf{d}\rangle \geq 0 \ \forall \mathbf{d} \in S\}$, where $\langle \cdot, \cdot \rangle$ is the standard inner product on $\mathbb{R}^n$. The convex hull of a set of points $\mathbf{p}^j$, $j\in J$ is denoted $\mathrm{conv}(\{\mathbf{p}^j\}_{j\in J})$. The conic hull of a set of vectors $\mathbf{v}^j$, $j\in J$ is denoted $\mathrm{coni}(\{\mathbf{p}^j\}_{j\in J})$.

An open $\epsilon$-ball around some point $\x \in \R{n}$ is given as $\mathcal{N}_\epsilon(\x) := \{\mathbf{y} \in \R{n}: \|\mathbf{y}-\x\| < \epsilon \}$.

% CONE STUFF
% % For any set $G$ (infinite or finite), the power set of $G$ is denoted $\mathcal{P}(G)$.

% For a set $S \subset \R{n}$, the set of feasible directions at some point $x\in\closed{S}$ is defined as 
% \begin{equation}
%     F_S(x) := \{ d\in\R{n} : \exists t_k \downarrow 0, \text{ with } x+t_k d \in S\}.
%     % \exists d_k \to d \text{ and } t_k \downarrow 0 \text{ with } x + t_k d_k \in C \}.
% \end{equation}

% For convenience, the dual cone of $F_S(x)$ at the point $x$ is denoted as $D_S(x) := (F_S(x))^*$. The powerset of some set $J$ is denoted $\mathcal{P}(J)$

% \subsection{Properties of convex cones}

% \begin{lemma}\label{lemma:cones}
% The following properties hold for convex cones.
% % [TODO enter book about cones].
% \begin{itemize}
%     \item For a convex set $S$, $F_{\closed{S}}(x) = \closed{F_S(x)}$.
%     \item For a set $S$, $S^{**}$ is the smallest closed convex cone containing $S$.
%     \item For sets $S_1 \subset S_2$, $S_2^* \subset S_1^*$.
%     \item For sets $S_1$, $S_2$ and for all $(S_1 \cup S_2) \subset S \subset cone(S_1 \cup S_2)$, $S^* = S_1^* \cap S_2^*$
% \end{itemize}
% \end{lemma}

\section{Formulation}
\label{sec:formulation}
\begin{figure}[t]
    \centering
    \fbox{\includegraphics[angle=-90,origin=c,width=0.90\linewidth]{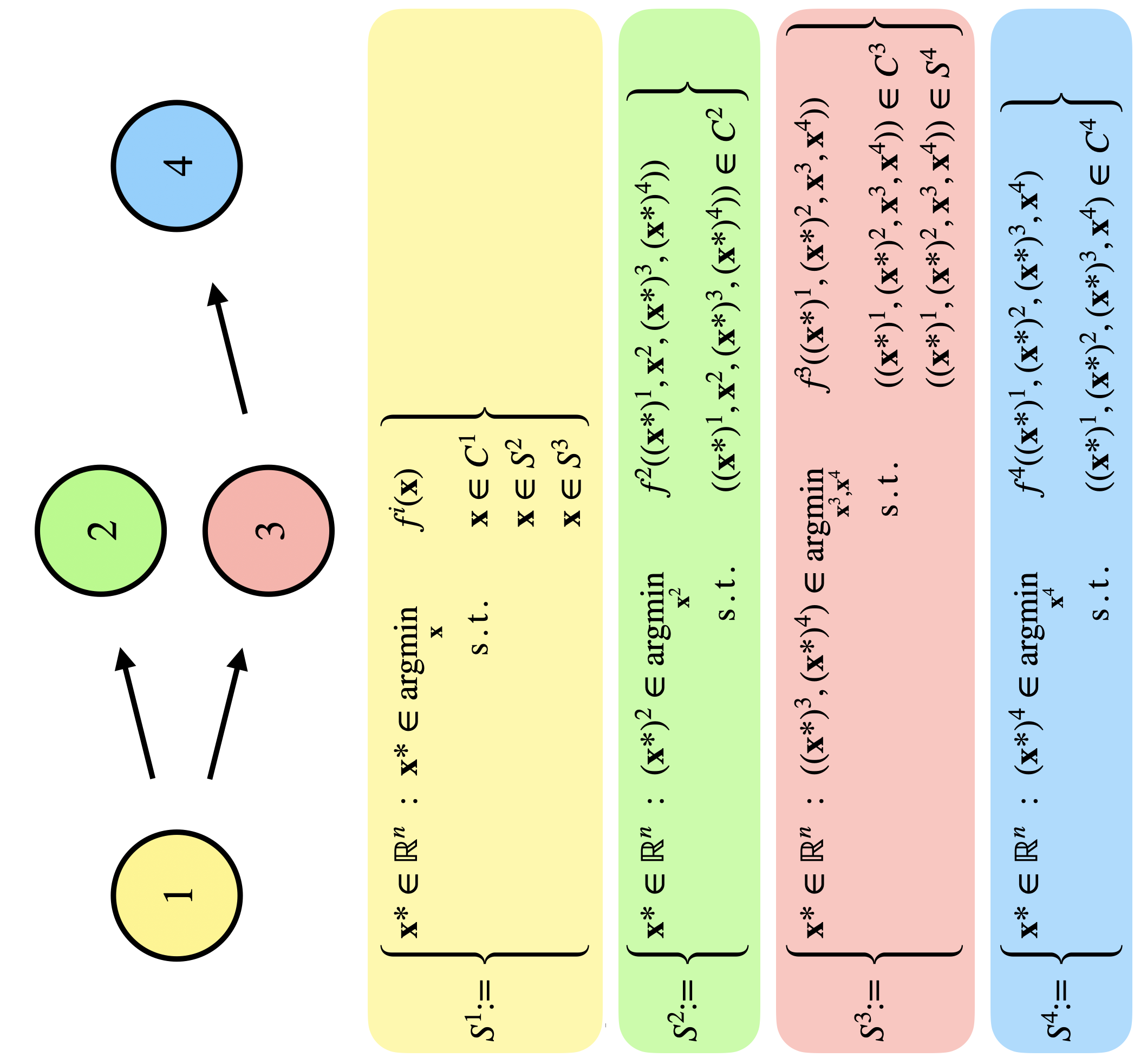}}
    \caption{A four-node Mathematical Program Network. Each node attempts to minimize a cost function $f^i$ while satisfying the constraints represented by $C^i$ as well as those which are imposed by the network structure. An equilibrium is any point which is an element of all four solution graphs $S^i$, which in this case is equivalent to being an element of $S^1$.}
    \label{fig:front}
\end{figure}

\begin{definition}[MPN] \label{def:MPN} A Mathematical Program Network (MPN) is defined by the tuple 
    \begin{equation*}
        (\{f^i, C^i, J^i\}_{i \in [N]}, E),
    \end{equation*}
    and represents a directed graph comprised of $N$ decision nodes (mathematical programs). The nodes jointly optimize a vector of decision variables $\mathbf{x} \in \mathbb{R}^n$. Each decision node $i \in [N]$ is represented by a cost function $f^i : \mathbb{R}^n \to \mathbb{R}$, a feasible set $C^i \subset \mathbb{R}^n$, and a decision index set ${J}^i \subset [n]$. The network structure of the MPN is defined in terms of the set of directed edges $E \subset [N] \times [N]$. The edge $(i,j) \in E$ iff node $j$ is a child of node $i$. 
\end{definition}

For ease of notation, some additional sets and vectors are defined. Let $n^i$ be the cardinality of $J^i$. Then the sub-vectors $\x^i$ of $\mathbf{x}$ are said to be the private decision variables for node $i$:
\begin{align*}
    \mathbf{x}^i &\in \mathbb{R}^{n^i} \ \ := [x_j]_{j\in{J}^i}.
\end{align*}

 The set of reachable transitions for an MPN is represented as $R \subset [N] \times [N]$. Specifically, $(i,j) \in R$ iff there exists a path from node $i$ to node $j$ by traversing edges in $E$. The set of reachable transitions is used to define the following useful sets:
 \begin{equation} \label{eq:Di}
\begin{aligned}
    D^i &:= \{i\} \cup \{j : (i,j) \in R \} \\
    D^{-i} &:= [N] \setminus D^i
\end{aligned}
\end{equation}

The vector $\x$ can be indexed according to these sets:
\begin{align*}
    \mathbf{x}^{D^i} &:= [\x^j]_{j\in D^i} \\
    \mathbf{x}^{D^{-i}} &:= [\x^j]_{j\notin D^i},
\end{align*}
The vector $\x^{D^i}$ is the concatenation of all private decision variables for node $i$ and its descendants. The vector $\x^{D^{-i}}$ are all other decision variables. These sub-vectors are often referenced in conjunction with the following shorthand. For any two vectors $\mathbf{x}$ and $\mathbf{x}^*$, the following identity holds:
\begin{equation*}
    \left(\mathbf{x}^{D^i}, (\mathbf{x}^*)^{D^{-i}}\right) \equiv \mathbf{y} \in \R{n},
\end{equation*}
where 
\begin{equation*}
    y_j := \begin{cases}
        x_j: & \exists k \in D^i, j \in J^k \\
        x^*_j: & \mathrm{else}.
    \end{cases}
\end{equation*}
This identity can be thought of as a way to combine $\x^{D^i}$ and $(\x^*)^{D^{-i}}$ into a single vector comparable to $\x$ or $\x^*$. Using all of the above-defined terms, the defining object of a MPN node, the solution graph, can be defined. The solution graph for some node $i \in [N]$ is the following:
% \begin{equation}
%     \label{eq:solution_graph}
%     % \begin{aligned}
%         S^i := \left\{ \begin{aligned} \mathbf{x}^* \in \mathbb{R}^n \ : \ (\mathbf{x}^i)^* \in &\argmin_{\mathbf{x}^i}  && f^i(\mathbf{x}^i, (\mathbf{x}^{-i})^*) \\
%         & \ \ \ \ \text{s.t.} && (\mathbf{x}^i, (\mathbf{x}^{-i})^*) \in C^i \\
%         &&& (\mathbf{x}^i, (\mathbf{x}^{-i})^*) \in S^j, \ (i,j) \in E \end{aligned} \right\}
%     % \end{aligned}
% \end{equation}
\begin{equation}
    \label{eq:solution_graph}
    % \begin{aligned}
        S^i := \left\{ \begin{aligned} \mathbf{x}^* \in \mathbb{R}^n \ : \ \left(\mathbf{x}^*\right)^{D^i} \in &\argmin_{\mathbf{x}^{D^i}}  && f^i\left(\mathbf{x}^{D^i}, \left(\mathbf{x}^*\right)^{D^{-i}}\right) \\
        & \ \ \ \ \text{s.t.} && \left(\mathbf{x}^{D^i}, \left(\mathbf{x}^*\right)^{D^{-i}}\right) \in C^i \\
        &&& \left(\mathbf{x}^{D^i}, \left(\mathbf{x}^*\right)^{D^{-i}}\right) \in S^j, \ (i,j) \in E \end{aligned} \right\}
    % \end{aligned}
\end{equation}

The $\argmin$ in \cref{eq:solution_graph} is taken in a local sense, so that $S^i$ defines the set of all $\mathbf{x}$ for which $\mathbf{x}^{D^i}$ are local minimizers to the optimization problem parameterized by $\mathbf{x}^{D^{-i}}$. The role that network architecture plays in each node's optimization problem is made clear from \cref{eq:solution_graph}. The private decision variables of child nodes are assumed by the parent node(s), with the requirement that the parents are constrained by the solution graphs of the children.

\begin{definition}[Equilibrium] \label{defn:equilibrium}
    A point $\mathbf{x}^* \in \mathbb{R}^n$ is an Equilibrium of a MPN iff $x^*$ is an element of each node's solution graph. Specifically, $\mathbf{x}^*$ in an equilibrium iff $\mathbf{x}^* \in S^*$ ,where
    \begin{equation} \label{eq:equilibrium}
         S^* := \bigcap_{i \in[N]} S^i.
    \end{equation}
\end{definition}

The solution graph of any mathematical program is a subset of its feasible set. Therefore, $S^i \subset S^j \ \forall (i,j) \in E$, therefore $S^i \subset S^j \ \forall (i,j) \in R$, and the following holds:

\begin{corollary}\label{cor:source_nodes}
    Let $I^{\mathrm{source}} \subset [N]$ be any index set such that $\forall j \in [N], \exists i \in I^{\mathrm{source}} : (i,j) \in R \ $. Then 
    \begin{equation}
        S^* = \bigcap_{i \in I^{\mathrm{source}}} S^i.
    \end{equation}
\end{corollary}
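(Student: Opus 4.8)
The plan is to prove the asserted equality by establishing the two inclusions separately. One direction is immediate: since $I^{\mathrm{source}} \subseteq [N]$, intersecting over a sub-collection of indices can only enlarge the intersection, so $S^* = \bigcap_{i\in[N]} S^i \subseteq \bigcap_{i \in I^{\mathrm{source}}} S^i$ directly from \cref{eq:equilibrium}. No work is needed for this containment.

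For the reverse inclusion $\bigcap_{i \in I^{\mathrm{source}}} S^i \subseteq S^*$, I would lean on the chain of containments recorded in the paragraph just preceding the corollary. The base fact is that $S^i \subseteq S^j$ whenever $(i,j) \in E$: the constraint ``$(\mathbf{x}^{D^i}, (\mathbf{x}^*)^{D^{-i}}) \in S^j$'' appears verbatim in the program defining $S^i$ in \cref{eq:solution_graph}, and it is in force at the optimizer $(\mathbf{x}^*)^{D^i}$ itself, which forces $\mathbf{x}^* \in S^j$. A routine induction on the length of a directed path in $E$ then upgrades this to $S^i \subseteq S^j$ whenever $(i,j) \in R$. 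Granting this, fix $\mathbf{x}^* \in \bigcap_{i \in I^{\mathrm{source}}} S^i$ and an arbitrary node $j \in [N]$. If $j \in I^{\mathrm{source}}$ then $\mathbf{x}^* \in S^j$ by assumption. Otherwise the hypothesis on $I^{\mathrm{source}}$ yields some $i \in I^{\mathrm{source}}$ with $(i,j) \in R$, whence $\mathbf{x}^* \in S^i \subseteq S^j$. In either case $\mathbf{x}^* \in S^j$, and since $j$ was arbitrary we conclude $\mathbf{x}^* \in \bigcap_{j\in[N]} S^j = S^*$.

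I do not expect a real obstacle here, since the substantive content --- that a solution graph is contained in the feasible set of each of its defining constraints, hence in the solution graph of every child --- is already in hand from the surrounding discussion, and everything else is set-theoretic bookkeeping. The only points needing a moment of care are the transitive step (the induction along a directed path from a source node to an arbitrary node, which relies on $R$ being exactly the reachability closure of $E$) and the boundary case $j \in I^{\mathrm{source}}$, where membership is used directly rather than through any containment.
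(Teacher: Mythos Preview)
Your proposal is correct and follows exactly the reasoning the paper sketches in the paragraph immediately preceding the corollary: the paper records $S^i \subset S^j$ for all $(i,j)\in E$, upgrades this to all $(i,j)\in R$, and then simply asserts the corollary, while you spell out the two-inclusion argument and the case split on whether $j\in I^{\mathrm{source}}$. There is no substantive difference in approach.
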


A \emph{cyclic network} is a network for which $(i,i) \in R$ for some $i\in[N]$. An \emph{acyclic network} is then a network which is not cyclic. For acyclic MPNs, the index sets in \cref{cor:source_nodes} correspond to the set of all source node indices, where a source node is a node without any incoming edges. Cyclic MPNs are considered to be degenerate for the following reason.

\begin{corollary}\label{cor:cyclic}
    Let $I^\mathrm{conn} \subset [N]$ be a set of fully-connected nodes in an MPN, meaning ($i,j) \in R, \ \forall i,j \in I^\mathrm{conn}$. Let $I^\mathrm{downstream} := \{j \notin I^\mathrm{conn}: \exists i \in I^\mathrm{conn} \ \mathrm{s.t.} \ (i,j) \in R\}$ be the set of all nodes which are descendants of the nodes in $I^\mathrm{conn}$ (but not themselves in $I^\mathrm{conn}$). Then any singleton set of the form
    \begin{equation} \label{eq:cor_cyclic}
        S^\mathrm{conn} = \{ \mathbf{x} \}, \ \mathbf{x} \in \bigg(\bigcap_{j \in I^\mathrm{conn}} C^j \bigg) \cap \bigg(\bigcap_{j \in I^\mathrm{downstream}} S^j\bigg)
    \end{equation} is a valid solution graph for all $i \in I^\mathrm{conn}$. 
\end{corollary}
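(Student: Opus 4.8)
The plan is to exhibit a self-consistent assignment of solution graphs realizing $S^i = S^\mathrm{conn}$ for every $i \in I^\mathrm{conn}$, and verify it by direct substitution into \cref{eq:solution_graph}. First I would record that all nodes of $I^\mathrm{conn}$ share the same descendant set: since every pair in $I^\mathrm{conn}$ is mutually reachable we have $I^\mathrm{conn} \subseteq D^i$ for each $i\in I^\mathrm{conn}$, and by transitivity of reachability a descendant of one node of $I^\mathrm{conn}$ is a descendant of all of them, so $D^i = I^\mathrm{conn}\cup I^\mathrm{downstream} =: D$ and $D^{-i} = [N]\setminus D$ for every $i\in I^\mathrm{conn}$; in particular every child of such a node lies in $D$. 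I would also reduce, without loss of generality, to the case that $I^\mathrm{conn}$ is a maximal mutually-reachable set (a strongly connected component). Enlarging $I^\mathrm{conn}$ to its strongly connected component only removes nodes from $I^\mathrm{downstream}$, and since each solution graph is contained in the corresponding feasible set ($S^j\subseteq C^j$), the hypothesis $\x\in(\bigcap_{j\in I^\mathrm{conn}} C^j)\cap(\bigcap_{j\in I^\mathrm{downstream}} S^j)$ for the smaller set implies the analogous hypothesis for the larger one, while the conclusion for the larger set is stronger.

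With $I^\mathrm{conn}$ strongly connected, I would assign $S^i := \{\x\}$ for every $i\in I^\mathrm{conn}$, keep the presupposed solution graphs $S^j$ for $j\in I^\mathrm{downstream}$ (which already satisfy $\x\in S^j$), and check \cref{eq:solution_graph} at each $i\in I^\mathrm{conn}$. Fix such an $i$. Since $(i,i)\in R$, a cycle through $i$ exists; its first edge $(i,j_1)\in E$ has $j_1$ mutually reachable with $i$, hence $j_1\in I^\mathrm{conn}$, so the child constraint $\big(\x^{D},(\x^*)^{D^{-i}}\big)\in S^{j_1}=\{\x\}$ occurs among the constraints defining $S^i$. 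For the inclusion $S^i\subseteq\{\x\}$: if $\x^*\in S^i$ then $(\x^*)^{D}$ is feasible for the inner program, so in particular $\big((\x^*)^{D},(\x^*)^{D^{-i}}\big)=\x^*\in S^{j_1}=\{\x\}$, forcing $\x^*=\x$. For the reverse inclusion: $\x$ is feasible for the inner program parameterized by $(\x)^{D^{-i}}$, because $\x\in C^i$ (from $\x\in\bigcap_{j\in I^\mathrm{conn}}C^j$) and $\x\in S^j$ for every child $j$ of $i$ (this is $\x\in\{\x\}$ when $j\in I^\mathrm{conn}$, and it follows from $\x\in\bigcap_{j\in I^\mathrm{downstream}}S^j$ when $j\in I^\mathrm{downstream}$); moreover the $j_1$-constraint forces any feasible $\x^{D}$ to satisfy $\x^{D}=(\x)^{D}$, so the feasible set of the inner program equals the singleton $\{(\x)^{D}\}$, whose unique point is trivially a local minimizer of $f^i$ over it. Hence $(\x)^{D}\in\argmin(\cdots)$, i.e. $\x\in S^i$, and therefore $S^i=\{\x\}=S^\mathrm{conn}$.

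Since this holds for every $i\in I^\mathrm{conn}$, and the solution graphs used in each verification are exactly those we assigned (the $\{\x\}$'s for $I^\mathrm{conn}$ and the presupposed $S^j$'s for $I^\mathrm{downstream}$), the whole system \cref{eq:solution_graph} is simultaneously satisfied, so $S^\mathrm{conn}=\{\x\}$ is a valid solution graph for all $i\in I^\mathrm{conn}$. The one point needing care — and the main obstacle — is the reduction to a strongly connected $I^\mathrm{conn}$: without it a node of $I^\mathrm{conn}$ may have no child in $I^\mathrm{conn}$ (its cycle passing only through $I^\mathrm{downstream}$ nodes), in which case nothing pins $S^i$ to a singleton and the statement fails as literally read; the reduction is legitimate precisely because the strongly connected component of $I^\mathrm{conn}$ still lies inside $I^\mathrm{conn}\cup I^\mathrm{downstream}$. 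Everything else is a mechanical substitution into \cref{eq:solution_graph}.
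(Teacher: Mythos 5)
The paper offers no proof of \cref{cor:cyclic} at all---it is asserted as an immediate consequence of the circular nature of \cref{eq:solution_graph}---so there is no ``official'' argument to compare yours against; judged on its own, your proof is correct under the natural reading of ``valid solution graph,'' namely that there exists a self-consistent assignment of graphs satisfying the defining equations in which $S^i=\{\x\}$ for every $i\in I^\mathrm{conn}$. Your verification is sound: after enlarging $I^\mathrm{conn}$ to its strongly connected component, every node in the component has at least one child inside the component (the first edge of any cycle through the node stays in it), that child's singleton constraint pins the inner feasible set to a single point, feasibility of that point follows from the hypothesis on $\x$ together with $S^j\subseteq C^j$, and the unique feasible point is trivially a local minimizer, so the assigned singletons reproduce themselves under \cref{eq:solution_graph}. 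Where you genuinely go beyond the paper is the reduction to the strongly connected component: you correctly observe that if $I^\mathrm{conn}$ is a proper subset of its component, a node of $I^\mathrm{conn}$ may have all of its children in $I^\mathrm{downstream}$, in which case nothing in the defining equation collapses its graph to $\{\x\}$ when the downstream graphs are held fixed---a gap the terse statement glosses over. The one caveat worth stating more explicitly is that your reduction reassigns the graphs of the nodes of $I^\mathrm{downstream}$ that lie in the component (they become $\{\x\}$ rather than the presupposed $S^j$), so what you establish is the existence of \emph{some} consistent global assignment realizing the singletons on $I^\mathrm{conn}$, not consistency with the particular downstream graphs named in the hypothesis; that weaker existence reading is the only one under which the corollary is true, and it matches the paper's stated intent (cyclic networks admit degenerate, essentially arbitrary singleton graphs), but your proof should own that interpretation rather than leave it implicit.
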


% \begin{proof}
% TODO
%     % Let $S_\mathrm{conn}$ be the non-empty singleton set defined as in \cref{eq:cor_cyclic}. For any node $i \in J_\mathrm{conn}$ and $k : (i,k) \in E$, $S^k \subset S^j \ \forall j \in J_\mathrm{downstream}\cap J_\mathrm{conn}$. Therefore the feasible region of 
    
%     % So clearly, $S_\mathrm{conn}$ is a subset of the feasible region of any node $i \in J_\mathrm{conn}$, and this feasible region intersected with $S_\mathrm{conn}$ is equal to $S_\mathrm{conn}$. Since $(i,i) \in R \ \forall i \in J_\mathrm{conn}$, 

%     % Since $S^i \subset S^j\ \forall (i,j) \in R$, $S^i = S^j \ \forall i,j \in J_\mathrm{conn}$. Denoting $S$ to be this solution graph for all nodes in $J_\mathrm{conn}$, it is clear that $S \subset C^i \ \forall i \in J_\mathrm{conn}$ and $S \subset S^j \ \forall j \in J_\mathrm{downstream}$. 
% \end{proof}

The result of \cref{cor:cyclic} is essentially that solution graphs (and therefore equilibrium points) are not meaningfully defined on cyclic network configurations. The cyclic nature of the network translates to a cyclic definition in what it means to be a solution graph, and the resulting circular definition allows for degenerate graphs which dilute the function of the decision nodes. Cyclic MPNs and the resulting equilibrium points are related to the notion of Consistent Conjectures Equilibria \cite{bresnahan1981duopoly}. \Cref{cor:cyclic} mirrors some of the criticism in the literature pertaining to Conjectural Variations and some of the logical difficulties they impose when trying to think of them in a rationality or mathematical programming framework \cite{makowski1987arerational,lindh1992inconsistency}. For these reasons, only acyclic networks are considered in the remainder of this work.

\subsection{Quadratic Program Networks}
Particular emphasis in this article is given to a special case of MPNs which possess desirable properties from an analysis and computation perspective. These special MPNs are termed Quadratic Program Networks, defined as the following:

\begin{definition}[QPNs] \label{defn:qpn}
    A Quadratic Program Network (QPN) is a MPN in which the cost function $f^i$ for each decision node is a quadratic function that is convex with respect to the variables $\x^{D^i}$, and the feasible region $C^i$ is a convex (not-necessarily closed) polyhedral region. In other words, each node, when considered in isolation, is a quadratic program.
\end{definition}

In the above definition, a not-necessarily closed polyhedral region is defined as a finite intersection of not-necessarily closed halfspaces, i.e. sets of the form
\begin{equation} \label{eq:slice}
    H \subset \mathbb{R}^n := \{ \x: \langle \mathbf{a}, \x \rangle \gneq b \},
\end{equation} 
where, $\mathbf{a}\in\mathbb{R}^n$, $b \in \mathbb{R}$, and $\gneq \in \{>,  \geq \}$ is used to indicate that the halfspace may or may not be a closed set. Therefore a not-necessarily convex polyhedral region $P$ can be expressed as
\begin{equation} \label{eq:poly}
    P \subset \mathbb{R}^n := \bigcap_{i\in[m]} H^i.
\end{equation}

It will be seen in later sections that QPNs are an important subclass of MPN, since the solution graphs of their constituent nodes are always unions of convex polyhedral regions. This property lends itself to computation, whereas the the solution graphs of general MPNs can quickly become overwhelmingly complex. Even though the QPN framework imposes considerable restrictions on the type of nodes allowed, they retain a remarkable degree of modeling flexibility. This is demonstrated in the following section. 

\section{Development}
\label{sec:dev}
In this section, the focus turns towards deriving results which will lead to algorithms for computing equilibrium points for MPNs. The development is centered around the representation of the solution graphs of each node, which are central to the definition of equilibrium (\cref{defn:equilibrium}). 

Specifically, it is shown that each solution graph of a MPN node can be represented as a union of regions derived from the solution graphs of its children. Starting with the childless nodes in an (acyclic) MPN, conditions of optimality lead to a representation of solution graph for those nodes. These representations can then be used to define optimality conditions for those nodes' parents, leading in turn to representations of their solution graphs. An inductive argument leads to representations of the solution graph for all nodes in the network, and therefore a representation for the set of equilibrium points. 

In order to establish this procedure, several intermediate results are established. First, consider a generic parametric optimization problem and its associated solution graph, 
\begin{equation} \label{eq:basic_opt}
    S := \left\{ 
    \begin{aligned} (\x^*, \w) : \ \x^* \in &\argmin_{\x}  && f(\x, \w) \\
    & \ \ \ \ \mathrm{s.t.} && (\x,\w) \in C \\
    \end{aligned} 
    \right\}
\end{equation}
where $\w \in \R{m}$ is some parameter vector, $f: \R{n}\times\R{m} \to \R{}$ is a continuous cost function and $C\subset \R{n}
\times\R{m}$ defines a feasible region. For a given $\w$, let $C_\w$ denote the slice of $C$ at $\w$, i.e. $C_\w := \{\x \in \R{n} : (\x, \w)  \in C\}$.

\begin{definition}
    For a given $\w$, a point $\x^*$ with $(\x^*, \w) \in C$ is a local optimum for (\ref{eq:basic_opt}), i.e. $(\x^*, \w) \in S$, if there exists some $\epsilon > 0$ such that 
    \begin{equation}
        f(\x^*, \w) \le f(\x, \w) \ \forall \x \in \mathcal{N}_\epsilon(\x^*) \cap C_\w.
    \end{equation}
\end{definition}

\begin{lemma}\label{lemma:closure}
    Define $\hat{S}$ as the following: 
    \begin{equation}\label{eq:basic_opt_closed}
    \hat{S} := \left\{ 
    \begin{aligned} (\x^*, \w) : \ \x^* \in &\argmin_{\x}  && f(\x, \w) \\
    & \ \ \ \ \mathrm{s.t.} && (\x,\w) \in \closed{C} \\
    \end{aligned} 
    \right\}.
    \end{equation}
    Then 
    \begin{equation}
        S = \hat{S} \cap C.
    \end{equation}
    % a point $\x^*$ is a local optimum for (\ref{eq:basic_opt}) if and only if $(\x^*, \w) \in C$ and $\x^*$ is a local optimum for 
    % \begin{equation} \label{eq:basic_opt_closed}
    % \begin{aligned}
    % \min_{\x} \ &f(\x, \w)\\
    % \mathrm{s.t.} \ &(\x,\w) \in \closed{C}.
    % \end{aligned}
    % \end{equation}
\end{lemma}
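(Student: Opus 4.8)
The plan is to establish the set equality $S=\hat S\cap C$ by proving the two inclusions separately; the inclusion $\hat S\cap C\subseteq S$ is routine and $S\subseteq\hat S\cap C$ carries the real content.

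For $\hat S\cap C\subseteq S$: take $(\x^*,\w)\in\hat S\cap C$. Membership in $\hat S$ gives $(\x^*,\w)\in\closed C$ together with some $\epsilon>0$ such that $f(\x^*,\w)\le f(\x,\w)$ for every $\x\in\N_\epsilon(\x^*)$ with $(\x,\w)\in\closed C$. Since $C\subseteq\closed C$, this inequality holds a fortiori for every $\x\in\N_\epsilon(\x^*)\cap C_\w$, and combined with $(\x^*,\w)\in C$ this is exactly the defining condition for $(\x^*,\w)\in S$. So this direction only uses that shrinking the feasible set of a neighborhood-minimization preserves local optimality.

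For $S\subseteq\hat S\cap C$: the inclusion in $C$ is immediate from $S\subseteq C$, so the task is to show $(\x^*,\w)\in S$ implies $(\x^*,\w)\in\hat S$. Let $\epsilon>0$ witness local optimality over $C_\w$. I would argue that the same $\epsilon$ witnesses local optimality over the slice $\{\x:(\x,\w)\in\closed C\}$: fix any $\x$ with $\|\x-\x^*\|<\epsilon$ and $(\x,\w)\in\closed C$, choose a sequence $\x_k\to\x$ with $(\x_k,\w)\in C$ (so $\x_k\in C_\w$), note that $\x_k\in\N_\epsilon(\x^*)$ for $k$ large since the ball is open, so $f(\x^*,\w)\le f(\x_k,\w)$, and let $k\to\infty$, invoking continuity of $f$ to conclude $f(\x^*,\w)\le f(\x,\w)$.

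The step that needs care — and where I expect the main obstacle to sit — is producing the approximating sequence $\x_k\to\x$ \emph{within the slice} $C_\w$ at the same parameter value, i.e. the fact that the slice of the closure is contained in the closure of the slice. The reverse containment is automatic, and when $C_\w=\emptyset$ the point is moot because then neither $S$ nor $\hat S\cap C$ contains anything over $\w$; but "slice of closure $=$ closure of slice" can fail for pathological $C$, so I would isolate it as a short sub-claim and prove it in the setting that actually matters here, namely when $C$ is a not-necessarily-closed polyhedral region with $C_\w\neq\emptyset$: there one checks directly that $\closed C$ is obtained by closing each defining halfspace (taking a segment from an interior-feasible point), whence the slice of $\closed C$ at $\w$ coincides with the closure of $C_\w$. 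With that sub-claim in hand, the limiting argument above closes the proof.
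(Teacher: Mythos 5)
Your skeleton matches the paper's: the inclusion $\hat{S}\cap C\subseteq S$ is handled exactly as the paper does (local optimality survives shrinking the feasible slice from $\closed{C}_\w$ to $C_\w$), and the content is in $S\subseteq\hat{S}$. For that direction you argue directly, approximating a point of the slice of $\closed{C}$ by points of $C_\w$ and passing to the limit by continuity of $f$, while the paper argues by contradiction: a strictly better point $\hat{\x}\in\N_\epsilon(\x^*)\cap(\closed{C}_\w\setminus C_\w)$ would, by an $\epsilon$--$\delta$ continuity estimate, produce a strictly better point of $\N_\epsilon(\x^*)\cap C_\w$. These are the same argument in different packaging. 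The substantive divergence is precisely the step you isolate: the paper asserts without proof that $\closed{C}_\w\setminus C_\w\subset\partial C_\w$, so that every neighborhood of $\hat{\x}$ meets $C_\w$ --- which is exactly your sub-claim that the slice of the closure is contained in the closure of the slice. You are right to be suspicious of it: for arbitrary $C$ it is false, and so is the lemma as stated. For instance, $C=\{(x,w)\in\R{2}: w>0,\ |x|\le 1\}\cup\{(0,0)\}$ is convex, $f(x,w)=x$ is linear, and $(0,0)$ lies in $S\cap C$ (the slice $C_0$ is the single point $\{0\}$) but not in $\hat{S}$ (the slice of $\closed{C}$ at $w=0$ is $[-1,1]$). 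So proving the sub-claim under a polyhedrality assumption, which is the setting in which the paper actually invokes the lemma for QPNs, is not excess caution; it supplies a hypothesis the paper's proof silently needs, and in that sense your write-up is tighter than the original.

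Two refinements to how you propose to prove the sub-claim. First, you neither need nor should ask for an interior-feasible point (feasible slices are often lower-dimensional): any $\x_0\in C_\w$ suffices, and such a point exists automatically here because $\x^*\in C_\w$. Given $(\x,\w)$ with $\x$ in the slice of $\closed{C}$, move along the half-open segment from $(\x,\w)$ toward $(\x_0,\w)$: every weak halfspace inequality is preserved (both endpoints satisfy it, since $\closed{C}\subseteq\bigcap_i\closed{H^i}$), and every strict inequality, being strict at $(\x_0,\w)$, is strict at all interior points of the segment. These segment points therefore lie in $C$, have parameter exactly $\w$, and converge to $(\x,\w)$, giving $\x\in\closed{C_\w}$ directly. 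Second, this argument bypasses the identity $\closed{C}=\bigcap_i\closed{H^i}$ altogether, so you avoid the nonempty-interior hypothesis of the paper's closure lemma for polyhedral regions (the one citing Rockafellar, Theorem 6.5); only the trivial inclusion $\closed{C}\subseteq\bigcap_i\closed{H^i}$ is used. With the sub-claim settled this way, your limiting argument closes the hard direction as you describe.
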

Recall $\closed{C}$ is the closure of $C$.
\begin{proof}
    Let $\closed{C}_\w$ be defined analogously to $C_\w$, i.e. the slice of $\closed{C}$ at $\w$. If $(\x^*,\w) \in \hat{S} \cap C$, then 
    \begin{equation}
        f(\x^*,\w) \le f(\x, \w) \ \forall \x \in \mathcal{N}_\epsilon(\x^*) \cap \closed{C}_\w,
    \end{equation}
    and since $C_\w \subset \closed{C}_\w$, it is clear that $\hat{S} \cap C \subset S$. For the reverse implication, if $(\x^*, \w) \in S$, then $(\x^*,\w) \in C$, so it must be shown that $S \subset \hat{S}$. This is proved via contradiction.

    For a given $\w$, suppose there exists some $\x^*$ such that $(\x^*, \w) \in S$, $(\x^*, \w) \notin \hat{S}$. Assume without loss of generality that $f(\x^*, \w) = 0$. This implies the existence of some $\epsilon > 0$ and $\hat{\x}$ such that $f(\hat{\x},\w) < 0$, with $\hat{\x} \in \N_{\epsilon}(\x^*) \cap (\closed{C}_\w \setminus C_\w)$. Note that $(\closed{C}_\w \setminus C_\w) \subset \partial C_\w$. Let $\delta = |f(\hat{\x},\w)|$. By the continuity of $f$, there exists an open neighborhood of $\hat{\x}$ such that $|f(\x, \w) - f(\hat{\x}, \w)| < \delta$ for all points $\x$ in this neighborhood, but since $\hat{\x} \in \partial C_\w$, every open neighborhood of $\hat{\x}$ contains at least one point in $C_\w$. Since $\hat{\x} \in \N_\epsilon(\x^*)$, an open set, all sufficiently local neighborhoods of $\hat{\x}$ are also subsets of $\N_\epsilon(\x^*)$. This implies that there exists at least one point $\x \in \N_\epsilon(\x^*) \cap C_\w$ with $|f(\x,\w) - f(\hat{\x},\w)| < \delta$, i.e. $f(\x,\w) < 0$, which contradicts $\x^*$ being a local optimum for (\ref{eq:basic_opt}). Therefore, the assumption must be incorrect, and $S \subset \hat{S}$ as desired.  \qed
\end{proof}

\begin{lemma}\label{lemma:local}
    For arbitrary $\tilde{\x} \in \R{n}$,  $\tilde{\w} \in \R{m}$, and $\epsilon > 0$, let $\tilde{S}_\epsilon(\hat{\x}, \hat{\w})$ be defined as the following: 
    \begin{equation}\label{eq:basic_opt_local}
    \tilde{S}_\epsilon(\tilde{\x}, \tilde{\w}) := \left\{ 
    \begin{aligned} (\x^*, \w) : \ \x^* \in &\argmin_{\x}  && f(\x, \w) \\
    & \ \ \ \ \mathrm{s.t.} && (\x,\w) \in C \cap \N_\epsilon(\tilde{\x}, \tilde{\w}) \\
    \end{aligned} 
    \right\}.
    \end{equation}
    Then 
    \begin{equation}
        S \cap \mathcal{N}_\epsilon(\tilde{\x},\tilde{\w}) = \tilde{S}_\epsilon(\tilde{\x}, \tilde{\w})
    \end{equation}
\end{lemma}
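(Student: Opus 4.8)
The plan is to prove the equality by establishing the two inclusions separately, in each case just unwinding the definition of local optimality. Neither \cref{lemma:closure} nor continuity of $f$ is needed here: the statement is purely topological, and the only substantive ingredient is that $\mathcal{N}_\epsilon(\tilde{\x}, \tilde{\w})$ is an open set.

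For the inclusion $S \cap \mathcal{N}_\epsilon(\tilde{\x}, \tilde{\w}) \subseteq \tilde{S}_\epsilon(\tilde{\x}, \tilde{\w})$, I would take a point $(\x^*, \w)$ in the left-hand side. Then $(\x^*,\w) \in C$ and $(\x^*, \w) \in \mathcal{N}_\epsilon(\tilde{\x}, \tilde{\w})$, so $(\x^*,\w)$ is feasible for the restricted problem of \cref{eq:basic_opt_local}. Moreover there is some $\epsilon' > 0$ witnessing local optimality of $\x^*$ over $\mathcal{N}_{\epsilon'}(\x^*) \cap C_\w$. Since the $\w$-slice of $C \cap \mathcal{N}_\epsilon(\tilde{\x}, \tilde{\w})$ is contained in $C_\w$, the same $\epsilon'$ witnesses local optimality for the restricted problem as well --- shrinking the feasible set can only remove competitors --- so $(\x^*, \w) \in \tilde{S}_\epsilon(\tilde{\x}, \tilde{\w})$. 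This direction is essentially monotonicity of local optimality under shrinking feasible sets.

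The reverse inclusion is where a genuine (if short) argument is required. Given $(\x^*, \w) \in \tilde{S}_\epsilon(\tilde{\x}, \tilde{\w})$ with local-optimality radius $\delta > 0$ for the restricted problem, I would first use openness of $\mathcal{N}_\epsilon(\tilde{\x}, \tilde{\w})$ to pick $\rho > 0$ with $\mathcal{N}_\rho(\x^*, \w) \subseteq \mathcal{N}_\epsilon(\tilde{\x}, \tilde{\w})$. Then for every $\x$ with $\|\x - \x^*\| < \rho$ one has $(\x, \w) \in \mathcal{N}_\rho(\x^*, \w) \subseteq \mathcal{N}_\epsilon(\tilde{\x}, \tilde{\w})$, so $\mathcal{N}_\rho(\x^*) \cap C_\w$ is contained in the $\w$-slice of $C \cap \mathcal{N}_\epsilon(\tilde{\x}, \tilde{\w})$. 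Setting $\epsilon' := \min\{\delta, \rho\}$, every $\x \in \mathcal{N}_{\epsilon'}(\x^*) \cap C_\w$ then lies in $\mathcal{N}_\delta(\x^*)$ intersected with that slice and hence satisfies $f(\x^*, \w) \le f(\x, \w)$. This shows $(\x^*, \w) \in S$, and since $(\x^*, \w) \in \mathcal{N}_\epsilon(\tilde{\x}, \tilde{\w})$ already, $(\x^*, \w)$ lies in the left-hand side.

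I expect the only real obstacle to be recognizing this last point: because local optimality concerns an arbitrarily small neighborhood of $\x^*$, and $\x^*$ sits strictly inside the ball $\mathcal{N}_\epsilon(\tilde{\x}, \tilde{\w})$, the extra constraint $(\x, \w) \in \mathcal{N}_\epsilon(\tilde{\x}, \tilde{\w})$ is locally vacuous and may be dropped without changing the comparison. Once the radii $\delta$ and $\rho$ are combined, the remainder is bookkeeping.
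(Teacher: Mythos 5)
Your proposal is correct and follows essentially the same route as the paper: the inclusion $S \cap \mathcal{N}_\epsilon(\tilde{\x},\tilde{\w}) \subseteq \tilde{S}_\epsilon(\tilde{\x},\tilde{\w})$ by monotonicity of local optimality under shrinking the feasible slice, and the reverse inclusion by using openness of $\mathcal{N}_\epsilon(\tilde{\x},\tilde{\w})$ to shrink the local-optimality radius so the extra ball constraint becomes vacuous. The only cosmetic difference is that you pick a full-space ball $\mathcal{N}_\rho(\x^*,\w)$ inside the $\epsilon$-ball while the paper shrinks $\epsilon'$ directly against the slice $(\mathcal{N}_\epsilon(\tilde{\x},\tilde{\w}))_\w$; both are the same idea.
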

\begin{proof}
    Consider some $(\x^*, \w) \in \tilde{S}_\epsilon(\tilde{\x}, \tilde{\w})$. Then by definition, $(\x^*, \w) \in C \cap \mathcal{N}_\epsilon(\tilde{\x},\tilde{\w})$, and there exists some $\epsilon' > 0$ such that
    \begin{align}
        f(\x^*,\w) \le f(\x, \w) \ \forall \x \in \mathcal{N}_{\epsilon'}(\x^*) \cap C_\w  \cap (\N_{\epsilon}(\tilde{\x},\tilde{\w}))_{\w},
    \end{align}
    since $(C \cap \N_{\epsilon}(\tilde{\x},\tilde{\w}))_\w =  C_\w  \cap (\N_{\epsilon}(\tilde{\x},\tilde{\w}))_{\w}$. Because $(\tilde{\x},\tilde{\w}))_{\w}$ is an open set containing $\x^*$, there exists a sufficiently small choice of $\epsilon'$ such that $\mathcal{N}_{\epsilon'}(\x^*) \subset (\N_\epsilon(\tilde{\x},\tilde{\w}))_{\w}$. But this implies 
    \begin{align}
        f(\x^*,\w) \le f(\x, \w) \ \forall \x \in \mathcal{N}_{\epsilon'}(\x^*) \cap C_\w,
    \end{align}
    which implies $(\x^*, \w) \in S \cap \mathcal{N}_\epsilon(\tilde{\x},\tilde{\w})$, and therefore $\tilde{S}_\epsilon(\tilde{\x}, \tilde{\w}) \subset S \cap \mathcal{N}_\epsilon(\tilde{\x},\tilde{\w})$. 

    Now, for the reverse implication, consider some $(\x^*, \w) \in S\cap \mathcal{N}_\epsilon(\tilde{\x},\tilde{\w})$. Then $(\x^*,\w) \in C$, $(\x^*, \w) \in \N_\epsilon(\tilde{\x}, \tilde{\w})$, and there exists some $\epsilon'$ such that
    \begin{equation}
        f(\x^*, \w) \le f(\x, \w) \ \forall \x \in C_\w \cap \N_{\epsilon'}(\x^*).
    \end{equation}
    Since $C_\w \subset C_\w \cap (\N_{\epsilon}(\tilde{\x},\tilde{\w}))_\w = (C \cap \N_{\epsilon}(\tilde{\x},\tilde{\w}))_\w$, this also implies $(\x^*,\w) \in \tilde{S}_\epsilon(\tilde{\x},\tilde{\w})$, and $S \cap \mathcal{N}_\epsilon(\tilde{\x},\tilde{\w}) \subset \tilde{S}_\epsilon(\tilde{\x},\tilde{\w})$. Therefore, 
    \begin{equation}
        S \cap \mathcal{N}_\epsilon(\tilde{\x},\tilde{\w}) \subset \tilde{S}_\epsilon(\tilde{\x},\tilde{\w}) \subset S \cap \mathcal{N}_\epsilon(\tilde{\x},\tilde{\w}).
    \end{equation} \qed
\end{proof}

Now, consider a region defined to be the finite union of a number of other regions: 
\begin{equation} \label{eq:Udef}
    U := \bigcup_{j \in J} U^j,
\end{equation}
with $U^j \subset \R{n}\times\R{m}$, and $J \subset \mathbb{N}$. Using these sets, define the following programs and associated solution graphs, where $f$ is again a continuous function:
% \begin{equation} \label{eq:MP} %\tag{$\mathrm{MP}$} 
%     \begin{aligned}
%     \min_{\x} \ \ \ \  & f(\x, \w) \\
%     \mathrm{s.t.}\ \ \ \  &(\x, \w) \in U,
%     \end{aligned}
% \end{equation}
\begin{equation}\label{eq:MP}
    R := \left\{ 
    \begin{aligned} (\x^*, \w) : \ \x^* \in &\argmin_{\x}  && f(\x, \w) \\
    & \ \ \ \ \mathrm{s.t.} && (\x,\w) \in U \\
    \end{aligned} 
    \right\}
\end{equation}
and for each $j \in J$,
% \begin{equation} \label{eq:MPj} % \tag{$\mathrm{MP}_j$} 
%     \begin{aligned}
%     \min_{\x} \ \ \ \  & f(\x, \w) \\
%     \mathrm{s.t.}\ \ \ \  &(\x, \w) \in \closed{U^j}.
%     % \text{s.t.} \ \ \ \  & x \in \bar{P_j}
%     \end{aligned}
% \end{equation}
\begin{equation}\label{eq:MPj}
    R^j := \left\{ 
    \begin{aligned} (\x^*, \w) : \ \x^* \in &\argmin_{\x}  && f(\x, \w) \\
    & \ \ \ \ \mathrm{s.t.} && (\x,\w) \in \closed{U^j} \\
    \end{aligned} 
    \right\}.
\end{equation}
Let $U_\w$ and $\closed{U^j}_\w$ be defined analogously to $C_\w$. For some $\x \in \mathbb{R}^n$ and $\w \in \R{m}$, define 
\begin{equation}
    \gamma_{U}(\x, \w) := \{j \in J: (\x,\w) \in \closed{U^j} \}.
\end{equation}

\begin{lemma} \label{lem:gamma_subset}
    Given a set $U$ of the form (\ref{eq:Udef}), a sufficiently small $\epsilon$, and any $(\x, \w)$,
    \begin{equation}
    \gamma_U(\hat{\x},\hat{\w}) \subset \gamma_U(\x, \w), \ \ \forall (\hat{\x}, \hat{\w}) \in \N_\epsilon(\x, \w).
    \end{equation}
\end{lemma}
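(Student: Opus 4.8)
The plan is to prove the contrapositive inclusion index-by-index, exploiting the finiteness of $J$ and the fact that each $\closed{U^j}$ is closed (so its complement is open). Fix the base point $(\x,\w)$ and split $J$ into the set $\gamma_U(\x,\w)$ and its complement $K := J \setminus \gamma_U(\x,\w)$. The claim $\gamma_U(\hat\x,\hat\w)\subset\gamma_U(\x,\w)$ is equivalent to: every $j\in K$ also lies outside $\gamma_U(\hat\x,\hat\w)$, i.e. $(\hat\x,\hat\w)\notin\closed{U^j}$.

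First I would handle each $j\in K$ separately. By definition of $\gamma_U$, $j\in K$ means $(\x,\w)\notin\closed{U^j}$, so $(\x,\w)$ lies in the open set $(\closed{U^j})'$; hence there is $\epsilon_j>0$ with $\N_{\epsilon_j}(\x,\w)\cap\closed{U^j}=\emptyset$. Then I would set
\begin{equation*}
    \epsilon := \min_{j\in K}\epsilon_j
\end{equation*}
when $K\neq\emptyset$, and any positive value (say $\epsilon:=1$) when $K=\emptyset$; this is where finiteness of $J$ (hence of $K$) is used, guaranteeing $\epsilon>0$. Now for any $(\hat\x,\hat\w)\in\N_\epsilon(\x,\w)$ and any $j\in K$, we have $(\hat\x,\hat\w)\in\N_{\epsilon_j}(\x,\w)$, so $(\hat\x,\hat\w)\notin\closed{U^j}$, i.e. $j\notin\gamma_U(\hat\x,\hat\w)$. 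Equivalently $\gamma_U(\hat\x,\hat\w)\subset J\setminus K=\gamma_U(\x,\w)$, which is the desired conclusion.

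There is essentially no hard step here — the statement is a soft topological fact. The only points that need care are: (i) invoking that $\closed{U^j}$ is closed (true by construction, since it is literally a closure) so that membership failure is an \emph{open} condition; (ii) using the finiteness of the union in (\ref{eq:Udef}) so that the minimum of the radii $\epsilon_j$ is attained and strictly positive — without finiteness the infimum could be $0$ and the lemma would fail; and (iii) disposing of the degenerate case $\gamma_U(\x,\w)=J$, where there is nothing to exclude and any $\epsilon>0$ works. I would state the lemma's "sufficiently small $\epsilon$" as exactly this $\epsilon$, noting that any smaller $\epsilon$ works as well.
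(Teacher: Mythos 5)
Your proposal is correct and follows essentially the same route as the paper's proof: note that $(\x,\w)$ lies in the open set $(\closed{U^j})'$ for each $j\notin\gamma_U(\x,\w)$ and shrink $\epsilon$ so the whole ball stays in those complements. Your version merely makes explicit the finite minimum over the radii $\epsilon_j$ and the trivial case $\gamma_U(\x,\w)=J$, which the paper leaves implicit in the phrase ``for sufficiently small $\epsilon$.''
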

\begin{proof}
    Note that $(\x,\w) \in (\closed{U^j})' \ \forall j \notin \gamma_U(\x,\w)$, and all sets $(\closed{U^j})'$ are open. Therefore for sufficiently small $\epsilon$, 
    \begin{equation}
        (\hat{\x},\hat{\w}) \in (\closed{U^j})' \ \forall j \notin \gamma_U(\x,\w), \ \forall (\hat{\x},\hat{\w}) \in \N_\epsilon(\x, \w).
    \end{equation}
    This implies the result. \qed
    % \begin{equation}
    %     \gamma_U(\hat{\x},\hat{\w}) \subset \gamma_U(\x, \w), \ \ \forall (\hat{\x}, \hat{\w}) \in \N_\epsilon(\x, \w).
    % \end{equation}
    % \qed
\end{proof}

\begin{lemma} \label{lem:unions}
    Given the definitions of $R$ and $R^j$ in (\ref{eq:MP}) and (\ref{eq:MPj}), 
    \begin{equation}
        (\x^*,\w) \in R \iff \left( (\x^*,\w) \in U, \text{ and } (\x^*, \w) \in R^j \ \forall j \in \gamma_{U}(\x^*, \w) \right).
    \end{equation}
    % Some pair $(\x^*,\w) \in R$
    % For a given $\w$, a point $\x^*$ is a local optimum for (\ref{eq:MP}) iff $\x^* \in U_\w$ is a local optimum for (\ref{eq:MPj}) for all $j \in \gamma_{U_\w}(\x^*)$.
\end{lemma}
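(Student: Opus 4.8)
\noindent\emph{Proof strategy.} The plan is to prove the biconditional by treating the two implications separately. In both directions the feasibility clause ``$(\x^*,\w)\in U$'' is essentially free — it is built into the definition of $R$ in one direction and assumed in the other — so the real work is transferring \emph{local optimality} between the program over the union $U$ and the programs over the closed pieces $\closed{U^j}$.

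For the forward implication, suppose $(\x^*,\w)\in R$. I would apply \cref{lemma:closure} with $C:=U$: it yields $R=\hat R\cap U$, where $\hat R$ is the solution graph of the same program with $U$ replaced by $\closed U$. Hence $(\x^*,\w)\in\hat R$, i.e.\ $\x^*$ is a local minimizer of $f(\cdot,\w)$ over the slice $\closed{U}_\w$. Since $U^j\subset U$ implies $\closed{U^j}\subset\closed U$, and therefore $\closed{U^j}_\w\subset\closed{U}_\w$ for every $\w$, any local minimizer over $\closed{U}_\w$ is a fortiori a local minimizer over the subset $\closed{U^j}_\w$. Finally, for any $j\in\gamma_U(\x^*,\w)$ we have $(\x^*,\w)\in\closed{U^j}$ by definition of $\gamma_U$, so $(\x^*,\w)$ is feasible for the program defining $R^j$ and is a local minimizer of its objective; hence $(\x^*,\w)\in R^j$. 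This disposes of the forward direction for all $j\in\gamma_U(\x^*,\w)$ simultaneously.

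For the backward implication, assume $(\x^*,\w)\in U$ and $(\x^*,\w)\in R^j$ for every $j\in\gamma_U(\x^*,\w)$; I must exhibit a radius on which $\x^*$ locally minimizes $f(\cdot,\w)$ over $U_\w$. First I would localize: for each of the finitely many $k\in J\setminus\gamma_U(\x^*,\w)$ the set $(\closed{U^k})'$ is open and contains $(\x^*,\w)$, so (this is the content of \cref{lem:gamma_subset}) there is $\epsilon_0>0$ with $\N_{\epsilon_0}(\x^*)\cap (U^k)_\w=\emptyset$ for all such $k$, and consequently $\N_{\epsilon_0}(\x^*)\cap U_\w=\N_{\epsilon_0}(\x^*)\cap\bigcup_{j\in\gamma_U(\x^*,\w)}(U^j)_\w$. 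For each $j\in\gamma_U(\x^*,\w)$, membership $(\x^*,\w)\in R^j$ supplies $\epsilon_j>0$ on which $f(\x^*,\w)\le f(\cdot,\w)$ over $\N_{\epsilon_j}(\x^*)\cap\closed{U^j}_\w\supset\N_{\epsilon_j}(\x^*)\cap(U^j)_\w$. Taking $\epsilon^*$ to be the minimum of $\epsilon_0$ and the finitely many $\epsilon_j$, every point of $\N_{\epsilon^*}(\x^*)\cap U_\w$ lies in some $(U^j)_\w$ with $j\in\gamma_U(\x^*,\w)$ and within distance $\epsilon_j$ of $\x^*$, so its objective value is at least $f(\x^*,\w)$; combined with $(\x^*,\w)\in U$ this gives $(\x^*,\w)\in R$.

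The step I expect to be delicate is the reduction in the forward direction. One is tempted to compare $U_\w$ and $\closed{U^j}_\w$ directly, but passing to a $\w$-slice does not commute with taking closures — a point of $\closed{U^j}$ at parameter value $\w$ need not be a limit of feasible points sharing that same $\w$ — so local optimality over $U_\w$ does not obviously restrict to local optimality over $\closed{U^j}_\w$. Routing the argument through \cref{lemma:closure} applied to the \emph{entire} union $U$ is what sidesteps this, since that lemma already absorbs the closure/slice interaction, after which monotonicity of closure under inclusion finishes the job. In the backward direction the only point requiring care is that $J$ is finite, which is precisely what permits choosing a single positive $\epsilon^*$ valid across all the relevant pieces.
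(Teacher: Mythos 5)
Your proof is correct and follows essentially the same route as the paper: the forward direction goes through \cref{lemma:closure} applied to $U$ and restricts local optimality to each closed piece $\closed{U^j}$ with $j\in\gamma_U(\x^*,\w)$, while the backward direction localizes to the indices in $\gamma_U(\x^*,\w)$ (the content of \cref{lem:gamma_subset}) and patches together the finitely many optimality radii. The only cosmetic difference is that you verify local optimality over $U_\w$ directly in the reverse direction, whereas the paper reassembles optimality over $\closed{U}_\w$ and concludes via \cref{lemma:closure}; this changes nothing of substance.
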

\begin{proof}
    Applying \cref{lemma:closure}, we have that $\x^*$ is a local opt for (\ref{eq:MP}) iff  $\x^* \in U_\w$ and $\exists \epsilon$: 
    \begin{equation} \label{eq:opt_union_closed}
    f(\x^*,\w) \le f(\x,\w) \ \forall \x \in \mathcal{N}_\epsilon(\x^*) \cap \closed{U}_\w.
    \end{equation} 
    Note that $\closed{U}_\w = \bigcup_{j\in J} \closed{U}^j_\w$. Therefore, by the definition of $\gamma_U$, $\exists \epsilon > 0$ such that
    \begin{equation} \label{eq:opt_j_closed}
        (\x^*, \w) \in \closed{U^j}, \ f(\x^*,\w) \le f(\x,\w) \ \forall \x \in \mathcal{N}_\epsilon(\x^*) \cap \closed{U^j}_\w, \ \forall j \in \gamma_U(\x^*, \w).
    \end{equation}
    This proves the $\Rightarrow$ direction. To prove the $\Leftarrow$ direction, it is already stated that $(\x^*, \w) \in U$, so all that is left to prove is that the condition (\ref{eq:opt_j_closed}) implies (\ref{eq:opt_union_closed}). To see this, note that for sufficiently small $\epsilon$, 
    \begin{equation} \label{eq:local_closed_U}
        \closed{U} \cap \N_\epsilon(\x^*, \w) =\bigcup_{j \in J} \closed{U^j} \cap \N_\epsilon(\x^*, \w) = \bigcup_{j \in \gamma_U(\x^*, \w)} \closed{U^j} \cap \N_\epsilon(\x^*, \w),
    \end{equation}
    where the last equality is implied in one direction by the fact that $\gamma_U(\x^*,\w) \subset J$, and in the other direction by \cref{lem:gamma_subset}. This further implies that 
    \begin{equation}
        \closed{U}_\w \cap \N_\epsilon(\x^*) = \bigcup_{j \in \gamma_U(\x^*, \w)} \closed{U^j}_\w \cap \N_\epsilon(\x^*),
    \end{equation} which when taken with (\ref{eq:opt_j_closed}), results in $(\ref{eq:opt_union_closed}$ as desired. \qed
\end{proof}

\begin{corollary}\label{cor:sol_union}
      Given the definitions of $R$ and $R^j$ in (\ref{eq:MP}) and (\ref{eq:MPj}),
      \begin{equation}
          R = \bigcup_{\substack{J_1 \in \mathcal{P}(J)\\J_2 = J \setminus J_1}} \left( U \bigcap_{j_1 \in J_1} R^{j_1} \bigcap_{j_2 \in J_2} \left(\closed{U^{j_2}}\right)' \right).
      \end{equation}
\end{corollary}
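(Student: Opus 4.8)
The plan is to derive \cref{cor:sol_union} directly from \cref{lem:unions} by case-splitting over the possible values of the finite index set $\gamma_U(\x^*,\w) \subset J$. \cref{lem:unions} characterizes membership in $R$ by a condition that depends on $(\x^*,\w)$ only through $\gamma_U(\x^*,\w)$, and since $J$ has only finitely many subsets, partitioning $R$ according to which subset $\gamma_U(\x^*,\w)$ equals will reproduce exactly the union on the right-hand side.

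First I would establish the inclusion $\subseteq$. Take $(\x^*,\w) \in R$ and set $J_1 := \gamma_U(\x^*,\w)$, $J_2 := J \setminus J_1$. By \cref{lem:unions}, $(\x^*,\w) \in U$ and $(\x^*,\w) \in R^{j_1}$ for every $j_1 \in J_1$, so $(\x^*,\w) \in U \cap \bigcap_{j_1 \in J_1} R^{j_1}$. For $j_2 \in J_2$ we have $j_2 \notin \gamma_U(\x^*,\w)$, which by the definition of $\gamma_U$ means $(\x^*,\w) \notin \closed{U^{j_2}}$, i.e. $(\x^*,\w) \in (\closed{U^{j_2}})'$. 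Hence $(\x^*,\w)$ lies in the term of the union indexed by the partition $(J_1,J_2)$.

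Next I would establish $\supseteq$. Suppose $(\x^*,\w)$ lies in the term indexed by some partition $(J_1,J_2)$ with $J_2 = J \setminus J_1$; that is, $(\x^*,\w) \in U$, $(\x^*,\w) \in R^{j_1}$ for all $j_1 \in J_1$, and $(\x^*,\w) \in (\closed{U^{j_2}})'$ for all $j_2 \in J_2$. The last condition says $(\x^*,\w) \notin \closed{U^{j_2}}$ for every $j_2 \in J_2$, so $\gamma_U(\x^*,\w) \cap J_2 = \emptyset$; since $\gamma_U(\x^*,\w) \subset J = J_1 \cup J_2$, this forces $\gamma_U(\x^*,\w) \subset J_1$. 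Therefore $(\x^*,\w) \in R^j$ for every $j \in \gamma_U(\x^*,\w)$, and together with $(\x^*,\w) \in U$, \cref{lem:unions} yields $(\x^*,\w) \in R$. Combining the two inclusions gives the claim.

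The only point requiring care is the bookkeeping of quantifiers in the reverse direction — namely checking that membership in a term forces $\gamma_U(\x^*,\w) \subset J_1$, so that \cref{lem:unions} applies — rather than any genuine analytic difficulty; all the topological content (openness of the complements $(\closed{U^j})'$ and the local stability of $\gamma_U$) has already been absorbed into \cref{lem:gamma_subset} and \cref{lem:unions}. One may additionally observe, using $R^j \subset \closed{U^j}$, that the terms of the union are in fact pairwise disjoint, since on the term indexed by $(J_1,J_2)$ one has $\gamma_U(\x^*,\w) = J_1$ exactly; this refinement is not needed for the set equality but clarifies why the decomposition is the natural one.
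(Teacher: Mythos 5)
Your proposal is correct and follows essentially the same route as the paper: both derive the identity from \cref{lem:unions} by decomposing according to the value of $\gamma_U(\x^*,\w)$ over the finitely many subsets $J_1\subset J$ (the paper phrases this via the sets $\Gamma_U(J_1,J_2)$, which are exactly the level sets of $\gamma_U$). Your closing observation that $R^j\subset\closed{U^j}$ forces $\gamma_U(\x^*,\w)=J_1$ on each term, making the terms disjoint, is the same fact the paper uses implicitly when summing over the partition.
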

\begin{proof}
    For some $J_1 \subset J$, $J_2 = J \setminus J_1$, and a set of the form
    \begin{equation}
        \Gamma_U(J_1,J_2) := \bigcap_{j_1 \in J_1} \closed{U^{j_1}} \bigcap_{j_2 \in J_2} \left(\closed{U^{j_2}}\right)',
    \end{equation}
    \cref{lem:unions} states that
    \begin{equation}
        R \cap \Gamma_U(J_1, J_2) = U \bigcap_{j_1 \in J_1} R^{j_1} \bigcap_{j_2 \in J_2} \left(\closed{U^{j_2}}\right)'.
    \end{equation}
    Considering all possible sets $J_1 \subset J$ and $J_2 = J \setminus J_1$ gives the result. \qed 
\end{proof}
    
\begin{corollary}\label{cor:sol_union_local}
      Given the definitions of $R$ and $R^j$ in (\ref{eq:MP}) and (\ref{eq:MPj}), there exists $\epsilon > 0$ such that
      \begin{equation} \label{eq:sol_union_local}
          R \cap \N_\epsilon(\x, \w) = \bigcup_{\substack{J_1 \in \mathcal{P}(\gamma_U(\x,\w))\\J_2 = \gamma_U(\x,\w) \setminus J_1}} \left( U \bigcap_{j_1 \in J_1} R^{j_1} \bigcap_{j_2 \in J_2} \left(\closed{U^{j_2}}\right)' \bigcap \N_\epsilon(\x,\w) \right).
      \end{equation}
\end{corollary}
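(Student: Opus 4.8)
The plan is to obtain (\ref{eq:sol_union_local}) from the global identity in \cref{cor:sol_union} by intersecting both sides with a sufficiently small neighborhood of $(\x,\w)$ and then deleting the terms of the union that either vanish or become redundant on that neighborhood. The only nontrivial input is the choice of $\epsilon$: fix $\epsilon > 0$ small enough that \cref{lem:gamma_subset} applies at $(\x,\w)$, so that $\gamma_U(\hat{\x},\hat{\w}) \subseteq \gamma_U(\x,\w)$ for every $(\hat{\x},\hat{\w}) \in \N_\epsilon(\x,\w)$; equivalently, $\N_\epsilon(\x,\w) \subseteq (\closed{U^j})'$ for every $j \in J \setminus \gamma_U(\x,\w)$. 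This is the step that carries all the geometric content, and it is where I expect the care to be needed.

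With $\epsilon$ so chosen, intersect the identity from \cref{cor:sol_union} with $\N_\epsilon(\x,\w)$ and split the union according to whether $J_1 \subseteq \gamma_U(\x,\w)$. If $J_1 \not\subseteq \gamma_U(\x,\w)$, pick $j_1 \in J_1 \setminus \gamma_U(\x,\w)$; since $R^{j_1} \subseteq \closed{U^{j_1}}$ (a solution of (\ref{eq:MPj}) is feasible for its own constraint $\closed{U^{j_1}}$) while every point of $\N_\epsilon(\x,\w)$ lies in $(\closed{U^{j_1}})'$, that term meets $\N_\epsilon(\x,\w)$ in the empty set. If $J_1 \subseteq \gamma_U(\x,\w)$, then $J_2 = J \setminus J_1 = (\gamma_U(\x,\w)\setminus J_1) \cup (J\setminus \gamma_U(\x,\w))$, and by the choice of $\epsilon$ each factor $(\closed{U^{j_2}})'$ with $j_2 \in J\setminus\gamma_U(\x,\w)$ already contains $\N_\epsilon(\x,\w)$ and may be dropped. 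Hence that term, intersected with $\N_\epsilon(\x,\w)$, equals $U \cap \bigcap_{j_1 \in J_1} R^{j_1} \cap \bigcap_{j_2 \in \gamma_U(\x,\w)\setminus J_1} (\closed{U^{j_2}})' \cap \N_\epsilon(\x,\w)$. Since the surviving index sets $J_1$ are precisely the elements of $\mathcal{P}(\gamma_U(\x,\w))$, with complement $J_2$ taken inside $\gamma_U(\x,\w)$, the union of the surviving terms is exactly the right-hand side of (\ref{eq:sol_union_local}).

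If one prefers not to route through \cref{cor:sol_union}, the same $\epsilon$ lets one argue directly with \cref{lem:unions}. For $(\x^*,\w) \in R \cap \N_\epsilon(\x,\w)$, \cref{lem:unions} gives $(\x^*,\w)\in U$ and $(\x^*,\w)\in R^j$ for all $j \in \gamma_U(\x^*,\w)$, and \cref{lem:gamma_subset} gives $\gamma_U(\x^*,\w)\subseteq \gamma_U(\x,\w)$; taking $J_1 = \gamma_U(\x^*,\w)$ and $J_2 = \gamma_U(\x,\w)\setminus J_1$ places $(\x^*,\w)$ in the corresponding term of the right-hand side, using that $j_2 \in J_2$ implies $j_2 \notin \gamma_U(\x^*,\w)$, hence $(\x^*,\w)\in(\closed{U^{j_2}})'$. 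Conversely, if $(\x^*,\w)$ lies in the term indexed by a partition $(J_1,J_2)$ of $\gamma_U(\x,\w)$, then $\gamma_U(\x^*,\w)\subseteq \gamma_U(\x,\w)$ by \cref{lem:gamma_subset} and $\gamma_U(\x^*,\w)\cap J_2 = \emptyset$ because $(\x^*,\w)\in(\closed{U^{j_2}})'$ for $j_2\in J_2$, so $\gamma_U(\x^*,\w)\subseteq J_1$ and hence $(\x^*,\w)\in R^j$ for all $j\in\gamma_U(\x^*,\w)$; \cref{lem:unions} then yields $(\x^*,\w)\in R$. Beyond fixing $\epsilon$ via \cref{lem:gamma_subset} and the bookkeeping of which intersection factors trivialize on $\N_\epsilon(\x,\w)$, I anticipate no real difficulty.
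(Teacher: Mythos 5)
Your argument is correct and follows essentially the same route as the paper: intersect the global decomposition of \cref{cor:sol_union} with $\N_\epsilon(\x,\w)$, use \cref{lem:gamma_subset} to choose $\epsilon$ so that $\N_\epsilon(\x,\w)\subset(\closed{U^{j}})'$ for all $j\notin\gamma_U(\x,\w)$, and observe that terms with $R^{j}\subset\closed{U^{j}}$ for such $j$ vanish while the corresponding complement factors become redundant. Your bookkeeping is just a more explicit spelling-out of the paper's proof (the alternative direct argument via \cref{lem:unions} is a fine bonus but not needed).
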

\begin{proof}
    This follows immediately from \cref{cor:sol_union} and noting that \cref{lem:gamma_subset} implies that for all $j \notin \gamma_U(\x,\w)$, $\N_\epsilon(\x, \w) \bigcap \closed{U^{j}} = \varnothing$, and therefore since $R^j \subset \closed{U^j}$, $\N_\epsilon(\x, \w) \bigcap R^j = \varnothing$. Furthermore, $\N_\epsilon(\x,\w) \subset (\closed{U^{j}})'$. Together, it's made clear that the entire set $J$ need not be considered, but rather only the local subset $\gamma_U(\x,\w)$. \qed
\end{proof}

\Cref{cor:sol_union_local} states that a local representation of the solution graph (\ref{eq:MP}) can be constructed using local representations of the solution graphs (\ref{eq:MPj}). This is significant, since the solution graphs $R^j$ can often be easily represented in cases of practical interest. This fact serves as the foundation for testing whether points are equilibrium solutions for MPNs as well as computing solutions, as described below.

\subsection{Quadratic Program Networks}

QP Networks have some desirable properties, which when taken together with the results in the preceding section, can be leveraged to derive algorithms for identifying equilibrium points. These properties are summarized in the following claims:

\begin{lemma}
    \label{lemma:QP_sol}
    Consider an optimization problem of the form (\ref{eq:basic_opt}), where $f$ is a quadratic function and convex with respect to $\x$, and $C$ is a closed, non-empty polyhedron in $\R{n} \times \R{m}$, i.e. 
    \begin{equation}
        C = \bigcap_{i \in [l]} H^i,
    \end{equation}
    where, for each $i \in [l]$, and some $\mathbf{a}^i \in \R{n}$, and $\mathbf{b}^i \in \R{m}$, $c^i \in \R{}$,
    \begin{equation}
        H^i = \left\{ (\x,\w) \in \R{n} \times \R{m} : \langle \x, \mathbf{a}^i \rangle + \langle \w, \mathbf{b}^i \rangle \ge c^i \right\}.
         % H^i = \left\{ (\x,\w) \in \R{n} \times \R{m} : \x^\intercal \mathbf{a}^i + \w^\intercal \mathbf{b}^i \ge c^i \right\}.
    \end{equation}
    % If
    % \begin{equation}
    %     \left(\bigcap_{i\in[l]} \interior{H_i}\right) \neq \emptyset,
    % \end{equation}
    Then the solution graph (\ref{eq:basic_opt}) is a union of polyhedral regions, and is given by 
    \begin{equation} \label{eq:sol_graph_QP}
        S = \bigcup_{L \in \mathcal{P}([l])} 
        \left\{ (\x,\w) \in C: \begin{aligned}
        % \begin{split} 
            &(\x,\w) \in \cap_{i \in L} \partial{H^{i}} \\
            &\nabla_x f(\x,\w) \in \mathrm{coni}( \{ \mathbf{a}^i \}_{i \in L})
        % \end{split}
        \end{aligned}
         \right\}.
    \end{equation}
    % \begin{equation}
    %     S = \left\{ (\x,\w) \in C : \nabla_\x f(\x,\w) \in (C_\w - \x)^*   
    %      \right\}
    % \end{equation}
\end{lemma}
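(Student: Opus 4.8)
The plan is to show the two set inclusions separately, relying on the KKT characterization of optimality for a convex QP over a closed polyhedron. Since $f$ is quadratic and convex in $\x$ and $C_\w$ is a (possibly empty) closed polyhedron, for a fixed $\w$ the inner problem $\min_{\x} f(\x,\w)$ s.t. $(\x,\w)\in C$ is a convex program; hence the KKT conditions are both necessary (a constraint qualification is automatic for polyhedral constraints, by Abadie/affine CQ) and sufficient for global — and therefore local — optimality. Concretely, $\x^*$ is optimal for the $\w$-slice iff $(\x^*,\w)\in C$ and there exist multipliers $\lambda_i \ge 0$, $i\in[l]$, with $\nabla_x f(\x^*,\w) = \sum_{i\in[l]} \lambda_i \mathbf{a}^i$ and complementary slackness $\lambda_i(\langle \x^*,\mathbf{a}^i\rangle + \langle \w,\mathbf{b}^i\rangle - c^i) = 0$. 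I would state this as the first step, citing the standard convex-programming result, and note that the gradient $\nabla_x f(\x,\w)$ is an affine function of $(\x,\w)$, so each region appearing in the claimed union is indeed polyhedral (the conic-membership condition $\nabla_x f(\x,\w)\in\mathrm{coni}(\{\mathbf{a}^i\}_{i\in L})$ unpacks, via Farkas/Caratheodory or just by introducing the $\lambda$'s, into finitely many linear inequalities and equalities in $(\x,\w,\lambda)$, then projected).

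For the inclusion $S \subset \bigcup_{L}(\cdots)$: take $(\x^*,\w)\in S$. Let $L := \{ i\in[l] : (\x^*,\w)\in\partial H^i\}$ be the active set at $(\x^*,\w)$. By the KKT necessity just invoked, there are $\lambda_i\ge 0$ with $\nabla_x f(\x^*,\w) = \sum_{i\in[l]}\lambda_i\mathbf{a}^i$ and $\lambda_i = 0$ for $i\notin L$; hence $\nabla_x f(\x^*,\w) = \sum_{i\in L}\lambda_i\mathbf{a}^i \in \mathrm{coni}(\{\mathbf{a}^i\}_{i\in L})$, and by construction $(\x^*,\w)\in C\cap\bigcap_{i\in L}\partial H^i$. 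So $(\x^*,\w)$ lies in the term indexed by this particular $L$. For the reverse inclusion: fix any $L\in\mathcal P([l])$ and suppose $(\x^*,\w)$ is in the corresponding term, i.e. $(\x^*,\w)\in C$, it is active on exactly (at least) the constraints in $L$, and $\nabla_x f(\x^*,\w) = \sum_{i\in L}\lambda_i\mathbf{a}^i$ for some $\lambda_i\ge 0$. Extend $\lambda$ by zero to all of $[l]$; then $\lambda_i\ge0$ for all $i$, the stationarity equation holds, and complementary slackness holds because for $i\in L$ the constraint is tight while for $i\notin L$ we set $\lambda_i=0$. Thus the full KKT system is satisfied, so by sufficiency $\x^*$ is a global (hence local) minimizer of the $\w$-slice, i.e. $(\x^*,\w)\in S$.

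One technical point worth flagging: the statement of \cref{eq:sol_graph_QP} writes each region as $\{(\x,\w)\in C : (\x,\w)\in\cap_{i\in L}\partial H^i,\ \nabla_x f(\x,\w)\in\mathrm{coni}(\{\mathbf a^i\}_{i\in L})\}$ — this only requires the constraints in $L$ to be \emph{active}, not that they are \emph{exactly} the active set, so a given optimal point will typically appear in several terms of the union (those indexed by any subset of its active set on which the gradient still lies in the corresponding cone). This is harmless for the set equality but should be remarked on so the reader does not expect the union to be disjoint. The main obstacle, such as it is, is simply making the KKT equivalence airtight — in particular justifying that no constraint qualification beyond affineness is needed and that local optimality coincides with global optimality here (both follow from convexity of $f$ in $\x$ together with the polyhedral, hence convex, feasible slice $C_\w$). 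Everything else is bookkeeping: translating "there exist $\lambda\ge0$ with $\nabla_x f = \sum\lambda_i\mathbf a^i$" into the conic-hull language and back, and checking that the resulting regions are genuinely polyhedral after eliminating the multipliers (a standard projection-of-a-polyhedron argument, or one can simply leave the description in lifted form as the statement does).
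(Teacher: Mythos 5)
Your proposal is correct and follows essentially the same route as the paper: the paper writes the necessary-and-sufficient first-order condition as $\nabla_\x f(\x,\w) \in (C_\w-\x)^*$ and decomposes that dual cone as the conic hull of the active constraint normals, which is exactly your KKT-multiplier formulation with $\lambda_i = 0$ on inactive constraints. The only cosmetic difference is in the polyhedrality step, where the paper passes to a halfspace representation of $\mathrm{coni}(\{\mathbf{a}^i\}_{i\in L})$ and uses affineness of $\nabla_\x f$, while you suggest the equivalent lifted-multiplier-plus-projection (or Farkas) argument.
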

\begin{proof}
    Because the problem (\ref{eq:basic_opt}) is a convex optimization problem with linear constraints, the set of solutions are given by the necessary and sufficient conditions of optimality,
    \begin{equation} \label{eq:qp_ocs}
        S = \left\{ (\x,\w) \in C : \nabla_\x f(\x,\w) \in (C_\w - \x)^*   
         \right\}.
    \end{equation}
    Note that $C_\w - \x$ is the tangent cone to $C_\w$ at $\x$. The dual cone of an intersection of sets is given by the Minkowski sum of the dual of each set being intersected. Therefore 
    \begin{equation} \label{eq:dual_of_int}
        (C_\w - \x)^* = \sum_{i=1}^l (H^i_\w - \x)^*,
    \end{equation}
    where each expression in the sum is given as
    \begin{equation}
        (H^i_\w - \x)^* = \begin{cases}
            \{\mathbf{0}\}, & \x \in \interior{H_\w^i} \\
            \{t \cdot \mathbf{a}^i,\ \forall t\ge 0\} , &\x \in \partial H_\w^i.
        \end{cases}
    \end{equation}
    Hence, the only terms contributing to the sum in (\ref{eq:dual_of_int} are those for which $\x \in \interior{H_\w^i}$. Therefore
    \begin{equation}
        (C_\w - \x)^* = \left\{ \left(\sum_{i: \x \in \interior{H_\w^i}} t^i \cdot \mathbf{a}^i\right), t^i \ge 0 \right\} = \mathrm{coni}( \{ \mathbf{a}^i \}_{i : \x \in \interior{H_\w^i}}).
    \end{equation}
    Using the fact that $\x \in \partial H_\w^i \iff (\x,\w) \in \partial H^i$, and considering all possible boundaries $\partial H^i$ that $(\x,\w)$ can lie within, the result follows. To see that each of the sets in the union (\ref{eq:sol_graph_QP}) are polyhedral, note that the dual cone can be expressed in halfspace representation, i.e. there exist vectors $\{\mathbf{y}^j\}_{j \in J_L}$ such that
    \begin{equation} \label{eq:dual_cone_duality}
        \mathbf{g} \in \mathrm{coni}( \{ \mathbf{a}^i \}_{i \in L}) \iff \langle \mathbf{g}, \mathbf{y}^j \rangle \ge 0, \forall j \in J_L.
         % \mathbf{g} \in \mathrm{coni}( \{ \mathbf{a}^i \}_{i \in L}) \iff  \mathbf{g}^\intercal \mathbf{y}^j \ge 0, \forall j \in J_L.
    \end{equation}
    Since $f$ is quadratic, $\nabla_\x f(\x,\w)$ is an affine expression, i.e. can be expressed as $\mathbf{Q}\x + \mathbf{R}\w + \mathbf{q}$, for some matrices $\mathbf{Q}\in\R{n\times n}, \mathbf{R}\in\R{n\times m }$, and vector $\mathbf{q} \in \R{n}$.
    Therefore, (\ref{eq:sol_graph_QP}) can equivalently be expressed as 
    \begin{equation} \label{eq:sol_graph_QP_hrep}
        S = \bigcup_{L \in \mathcal{P}([l])} 
        \left\{ (\x,\w) \in C: \begin{aligned}
        % \begin{split} 
            &\langle \x, \mathbf{a}^i \rangle + \langle \w, \mathbf{b}^i \rangle = c^i, \ &&\forall i \in L\\
            &\langle \x, \mathbf{Q}^\intercal \mathbf{y}^j \rangle + \langle \w, \mathbf{R}^\intercal \mathbf{y}^j \rangle \ge -\langle \mathbf{q}, \mathbf{y}^j \rangle, \ &&\forall j \in J_L 
            % &\langle \mathbf{y}^j, \mathbf{Q}\x \rangle + \langle \mathbf{y}^j, \mathbf{R}\w \rangle \ge -\langle  \mathbf{y}^j, \mathbf{q} \rangle, \ &&\forall j \in J_L 
        % \end{split}
        \end{aligned}
         \right\},
    \end{equation}
    which is clearly a union of polyhedral regions. \qed
\end{proof}

\begin{lemma}\label{eq:poly_closure}
    Consider some not-necessarily closed polyhedral region $P$ of the form (\ref{eq:poly}), i.e. the intersection of not-necessarily closed halfspaces $H^i$, $i \in [m]$. If $\interior{P} \ne \emptyset$, then 
    \begin{equation}
        \closed{P} = \bigcap_{i \in [m]} \closed{H^i}.
    \end{equation}
\end{lemma}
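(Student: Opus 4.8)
The plan is to establish the two inclusions separately, noting that only one of them requires the hypothesis $\interior{P} \ne \emptyset$. The inclusion $\closed{P} \subset \bigcap_{i\in[m]}\closed{H^i}$ is immediate: since $P \subset H^i \subset \closed{H^i}$ for every $i$, we get $P \subset \bigcap_{i\in[m]}\closed{H^i}$, and the right-hand side is closed (a finite intersection of closed halfspaces), so passing to the closure of $P$ preserves the containment. This direction does not use the nonempty-interior assumption.

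The substantive direction is $\bigcap_{i\in[m]}\closed{H^i} \subset \closed{P}$. First I would dispose of degenerate halfspaces: if the normal vector of $H^i$ is $\mathbf{0}$, then $H^i$ is either $\varnothing$ or $\R{n}$; the empty case is excluded since $\interior{P} \ne \varnothing$ forces $P \ne \varnothing$, while $H^i = \R{n}$ (with $\closed{H^i} = \R{n}$) can simply be deleted from both intersections without changing either side. So I may assume each $H^i$ has a nonzero normal $\mathbf{a}^i$, in which case $\interior{H^i} = \{\x : \langle \mathbf{a}^i,\x\rangle > b^i\}$ and $\closed{H^i} = \{\x : \langle \mathbf{a}^i,\x\rangle \ge b^i\}$, regardless of whether $H^i$ was originally the open or the closed variant of \eqref{eq:slice}.

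The key step is a convexity/segment argument. Fix $\x_0 \in \interior{P}$; since some ball around $\x_0$ is contained in $P \subset H^i$, we have $\x_0 \in \interior{H^i}$, i.e. $\langle \mathbf{a}^i,\x_0\rangle > b^i$ for every $i$. Given an arbitrary $\y \in \bigcap_{i\in[m]}\closed{H^i}$, consider the segment $\x_t := (1-t)\y + t\,\x_0$ for $t \in (0,1]$. For each $i$, linearity gives $\langle \mathbf{a}^i,\x_t\rangle = (1-t)\langle \mathbf{a}^i,\y\rangle + t\langle \mathbf{a}^i,\x_0\rangle \ge (1-t)b^i + t\langle \mathbf{a}^i,\x_0\rangle > b^i$, using $t > 0$ and $\langle \mathbf{a}^i,\x_0\rangle > b^i$. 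Hence $\x_t \in \interior{H^i}$ for all $i$, so $\x_t \in \bigcap_{i\in[m]}\interior{H^i} \subset \bigcap_{i\in[m]} H^i = P$. Letting $t \to 0^+$ yields $\x_t \to \y$, so $\y \in \closed{P}$, which completes the inclusion and the lemma.

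I do not anticipate any real obstacle here: the only points requiring care are the bookkeeping for degenerate halfspaces and the uniform treatment of the two variants of $\gneq$, both handled by the reductions above. The heart of the matter is the elementary fact that the open segment joining an interior point of a convex set to a point of its closure stays inside the (open) defining halfspaces, hence inside $P$.
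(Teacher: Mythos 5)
Your proof is correct. It differs from the paper in route: the paper disposes of this lemma with a one-line citation to Theorem 6.5 of \cite{rockafellar2015convex} (the closure of a finite intersection of convex sets whose relative interiors share a common point equals the intersection of the closures), whereas you give a direct, self-contained argument specialized to halfspaces --- the easy inclusion from closedness of $\bigcap_i \closed{H^i}$, plus the line-segment principle made explicit by the linear computation $\langle \mathbf{a}^i,\x_t\rangle \ge (1-t)b^i + t\langle \mathbf{a}^i,\x_0\rangle > b^i$ for $t\in(0,1]$. Your handling of degenerate halfspaces (zero normal) and of the two variants of $\gneq$ is careful and necessary for the reduction to the strict/weak inequality descriptions of $\interior{H^i}$ and $\closed{H^i}$. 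What each approach buys: the citation is shorter and invokes a more general fact (stated for arbitrary convex sets via relative interiors, so it would even cover cases where $\interior{P}=\emptyset$ but the relative interiors meet --- more than the lemma needs), while your argument is elementary, uses exactly the stated hypothesis $\interior{P}\ne\emptyset$, and keeps the paper self-contained; it is essentially the standard proof of the special case of Rockafellar's theorem that is actually being used.
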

\begin{proof}
 As in \cite{rockafellar2015convex}, theorem 6.5.
\end{proof}

\begin{lemma} \label{lemma:QPN_sol}
    The solution graph of any node $i$ in an acyclic QPN can be expressed as a union of not-necessarily closed polyhedral regions, i.e.
    \begin{equation} 
        S^i = \bigcup_{j \in J_i} P^j,
    \end{equation}
    where $J_i$ is some index set, and each set $P^j$ is of the form (\ref{eq:poly}). 
\end{lemma}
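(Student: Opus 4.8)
The plan is to induct along a topological ordering of the acyclic network, from the childless nodes toward the source nodes. The only ingredient beyond the already-established results is the following elementary stability of the family $\mathcal{F}$ of sets that are finite unions of not-necessarily closed polyhedral regions of the form (\ref{eq:poly}): $\mathcal{F}$ is closed under finite unions (trivially) and finite intersections (concatenate the halfspace lists, then distribute the unions), and it is closed under intersection with the complement of a closed polyhedron, because such a complement is a finite union of open halfspaces $\{\x : \langle \mathbf{a},\x\rangle < b\}$, each of which is a not-necessarily closed halfspace in the sense of (\ref{eq:slice}). I will also use that the closure of any region of the form (\ref{eq:poly}) is a closed polyhedron --- given by \cref{eq:poly_closure} when it has nonempty interior, and still polyhedral in the degenerate case.

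For the base case, let $i$ be a childless node. Then $D^i=\{i\}$ and there are no solution-graph constraints from children, so $S^i$ is exactly the solution graph (\ref{eq:basic_opt}) of the quadratic program that minimizes $f^i(\,\cdot\,,\x^{D^{-i}})$ over $C^i$, with $\x=\x^{D^i}$ the decision variable and $\w=\x^{D^{-i}}$ the parameter; the hypotheses of \cref{lemma:QP_sol} hold since $f^i$ is quadratic and convex in $\x^{D^i}$. Applying \cref{lemma:closure}, $S^i=\hat S^i\cap C^i$ where $\hat S^i$ is the solution graph over $\closed{C^i}$; by \cref{lemma:QP_sol} (together with \cref{eq:poly_closure} to write $\closed{C^i}$ as an intersection of closed halfspaces) $\hat S^i$ is a finite union of closed polyhedra, and intersecting with the not-necessarily closed polyhedron $C^i$ leaves the result in $\mathcal{F}$.

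For the inductive step, suppose every child $j$ with $(i,j)\in E$ has $S^j\in\mathcal{F}$. The feasible region appearing in (\ref{eq:solution_graph}) is $U:=C^i\cap\bigcap_{(i,j)\in E}S^j$, which lies in $\mathcal{F}$ by the closure properties above; write $U=\bigcup_{j\in J}U^j$. The program defining $S^i$ is then precisely of the form (\ref{eq:MP}) with continuous cost $f^i$, $\x=\x^{D^i}$, and $\w=\x^{D^{-i}}$ --- a legitimate partition since $D^j\subset D^i$ for every child, so these constraints involve only the $D^i$-variables and the $D^{-i}$-parameters. \cref{cor:sol_union} then gives
\[
  S^i=\bigcup_{\substack{J_1\in\mathcal{P}(J)\\ J_2=J\setminus J_1}}\left(U\cap\bigcap_{j_1\in J_1}R^{j_1}\cap\bigcap_{j_2\in J_2}\left(\closed{U^{j_2}}\right)'\right),
\]
where $R^{j_1}$ is the solution graph of minimizing $f^i$ over the closed polyhedron $\closed{U^{j_1}}$. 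By \cref{lemma:QP_sol} each $R^{j_1}$ is a finite union of closed polyhedra, and each $\left(\closed{U^{j_2}}\right)'$ is a finite union of open halfspaces. Distributing all the finite unions and invoking the closure properties of $\mathcal{F}$ once more, every term in the display --- and hence $S^i$ --- lies in $\mathcal{F}$, completing the induction.

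Essentially all of the substance is carried by \cref{lemma:closure}, \cref{lemma:QP_sol}, and \cref{cor:sol_union}; the step needing care is the combinatorial bookkeeping in the inductive case, namely verifying that $\mathcal{F}$ really is stable under every operation used, in particular under intersection with the complements $\left(\closed{U^{j_2}}\right)'$, which are the only sets in sight that are not themselves polyhedra. A secondary point is the edge case of a piece $U^{j}$ with empty interior, where \cref{eq:poly_closure} does not literally apply; this is harmless, since the closure of any region of the form (\ref{eq:poly}) is still a closed polyhedron, which is all that \cref{lemma:QP_sol} and \cref{cor:sol_union} require of it.
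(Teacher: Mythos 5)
Your proof is correct and follows essentially the same route as the paper's: induction from the childless nodes upward, with the base case handled by \cref{lemma:closure} and \cref{lemma:QP_sol} and the inductive step by \cref{cor:sol_union} applied to $U = C^i \cap \bigcap_{(i,j)\in E} S^j$. The only difference is that you spell out the stability of the class of finite unions of not-necessarily closed polyhedra (and the empty-interior edge case for closures), bookkeeping the paper leaves implicit.
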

\begin{proof}
    Consider first any node $j$ without children. As with all nodes in a QPN, the region $C^j$ is polyhedral and not-necessarily closed. Then \cref{lemma:closure,lemma:QP_sol} state that the solution graph must be given by the intersection of $C^j$ and a set of the form (\ref{eq:sol_graph_QP}), which results in a union of polyhedral regions, which each may not be closed if $C^j$ is not closed. 

    Now consider any node $i$ which does have children, and assume that the solution graphs of its children are unions of the above form. It will be seen that the solution graph of node $i$ is also of that form. First, note that the decision problem for any such node is of the form (\ref{eq:MP}), where the set $U$ is the intersection of $C^i$ and all solution graphs of its children. Hence $U$ is a union of not-necessarily closed polyhedral regions. \Cref{cor:sol_union} states that the solution graph is given by a union of sets formed by intersecting $U$ (union of not-necessarily closed polyhedra), sets of the form $R^{j_1}$ (unions of closed polyhedra), and sets of the form $(\closed{U^{j_2}})'$ (unions of open polyhedra). This intersection gives rise again to unions of not-necessarily closed polyhedra, implying the result. \qed
\end{proof}
\begin{corollary}
    \label{cor:QPN_equilibrium}
    The set of equilibria for a QPN is a union of not-necessarily closed polyhedral regions.
\end{corollary}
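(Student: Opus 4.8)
The plan is to obtain this as an immediate bookkeeping consequence of \cref{lemma:QPN_sol} together with the definition of equilibrium. By \cref{defn:equilibrium}, the equilibrium set of a QPN is $S^* = \bigcap_{i\in[N]} S^i$, and by \cref{cor:source_nodes} one may even restrict the intersection to the source nodes. \cref{lemma:QPN_sol} provides, for each node $i$, a representation $S^i = \bigcup_{j\in J_i} P^{i,j}$ in which each $P^{i,j}$ is a not-necessarily closed polyhedral region of the form \cref{eq:poly}. So it suffices to show that a finite intersection of finite unions of such regions is again a finite union of such regions.

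First I would distribute the intersection over the unions:
\[
    S^* \;=\; \bigcap_{i\in[N]} \bigcup_{j\in J_i} P^{i,j} \;=\; \bigcup_{(j_i)_{i\in[N]} \in \prod_{i\in[N]} J_i} \ \bigcap_{i\in[N]} P^{i,j_i},
\]
a union indexed by the Cartesian product of the $J_i$. Next I would check that the intersection of finitely many not-necessarily closed polyhedral regions is again one: each such region is, by the definition \cref{eq:poly}, a finite intersection of not-necessarily closed halfspaces of the form \cref{eq:slice}, so any finite intersection of them is also a finite intersection of such halfspaces, hence again a not-necessarily closed polyhedral region (possibly empty, which is harmless). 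Consequently each inner set $\bigcap_{i\in[N]} P^{i,j_i}$ is a not-necessarily closed polyhedral region, and $S^*$ is a finite union of them, as claimed.

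The only point needing a word of care is finiteness of the index set $\prod_{i\in[N]} J_i$, i.e. finiteness of each $J_i$. This is already implicit in \cref{lemma:QPN_sol}: at a childless node the union is over the power set of the finite halfspace-index set defining $C^i$ (via \cref{lemma:QP_sol}), and at a node with children the inductive step combines finitely many finite unions through \cref{cor:sol_union}, which itself ranges over the power set of a finite index set; propagating this through the finite acyclic network keeps every $J_i$ finite. I would state this explicitly. Beyond that there is no real obstacle — the substantive work was carried out in proving \cref{lemma:QPN_sol}, and this corollary is a short distributive-law argument layered on top of it.
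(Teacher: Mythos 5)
Your proposal is correct and follows the same route as the paper, which simply notes that the corollary is immediate from \cref{lemma:QPN_sol} and \cref{defn:equilibrium}; you merely spell out the routine details (distributing the intersection over the finite unions, closing the class of not-necessarily closed polyhedral regions under finite intersection, and finiteness of the index sets) that the paper leaves implicit.
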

\begin{proof} 
    This follows imediately from \cref{lemma:QPN_sol} and \cref{defn:equilibrium}. \qed
\end{proof}

The preceding results imply computational routines for checking whether a point is an equilibrium for a QPN, and similarly, searching for an equilibrium. These schemes are described in the next section.

\section{QPN Equilibrium Computation}
\label{sec:comp}
The results of the preceding section established that the solution graph for each node in a QPN is a finite union of polyhedral regions. It is easy to see that the number of regions comprising these unions can be exceedingly large. Nevertheless, \cref{lemma:local} suggests that to check whether some point is an element of any node's solution graph, only those regions local to the point need be considered. This is the key concept that will drive the computational schemes developed in this section. 

Throughout this section, it will be assumed that the following routines are available for use:
\begin{itemize}
    \item A solver capable of efficiently finding a solution to an optimization problem of the form (\ref{eq:basic_opt}) in which $f$ is a convex quadratic function, and $C$ is a closed, non-empty polyhedron. One such freely available solver is \cite{osqp}.
    \item A computational routine capable of transforming between $H$- and $V$- representations of polyhedra. One such freely available routine is \cite{fukuda2003cddlib}.
    \item A solver capable of efficiently finding a solution to a Linear Mixed Complementarity Problem (LMCP), i.e. a problem of the form \begin{equation*}
        \begin{aligned}
            \mathrm{find} \ \ &  \mathbf{z} \\
            \mathrm{s.t.} \ \ & \exists \ \mathbf{w}_1, \mathbf{w}_2 : \\ 
            &\mathbf{M}\mathbf{z} + \mathbf{q} = \mathbf{w}_1 - \mathbf{w}_2 \\
            &\mathbf{0} \le \mathbf{z} - \mathbf{l} \perp \mathbf{w}_1 \ge 0 \\
            &\mathbf{0} \le \mathbf{u} - \mathbf{z} \perp \mathbf{w}_2 \ge 0,
        \end{aligned}
    \end{equation*}
    where $\mathbf{M}$ is a square matrix, and $\mathbf{l}$ and $\mathbf{u}$ are (potentially infinite) bounds on the variables $\mathbf{z}$. One such available solver is \cite{dirkse1995path}.
\end{itemize}

Using these foundational routines, \cref{alg:check_qp,alg:qp_solgraph,alg:qpn_solgraph,alg:equilibrium} are developed and then composed in \cref{alg:eq_search} as a routine for computing equilibrium points of a QPN.

\begin{algorithm}
\caption{Check QP Solution}\label{alg:check_qp}
\begin{algorithmic}[1]
\Require \textbf{ } \begin{itemize} \itemindent=-6pt
\item[$\bullet$] QP:
    \begin{itemize} \itemindent=-16pt
        \item convex cost $f(\x) = \frac{1}{2} \x^\intercal \mathbf{Q} \x + \x^\intercal \mathbf{q}$
        \item feasible set $C = \{ \x: \mathbf{A} \x + \mathbf{b} \ge \mathbf{0}\}$
        \item decision indices $J$
    \end{itemize}
    \item[$\bullet$] Candidate $\x \in \mathbb{R}^n$
\end{itemize}
% \Require QP: cost $f(\x) = \frac{1}{2} \x^\intercal \mathbf{Q} \x + \x^\intercal \mathbf{q}$, feasible set $C = \{ \x: \mathbf{A} \x + \mathbf{b} \ge \mathbf{0}\}$, decision indices $J$
% \Require Candidate $\x \in \mathbb{R}^n$
\If{$\exists i : (\mathbf{A} \x + \mathbf{b})_i < 0$} 
\State \textbf{return false}
\EndIf
\State $I = \{ i : (\mathbf{A} \x + \mathbf{b})_i = 0 \}$
\State $\mathbf{Q}_J, \mathbf{q}_J \gets $ rows of $\mathbf{Q}$, $\mathbf{q}$ indexed by $J$
\State $\mathbf{A}_{I,J} \gets$ rows indexed by $I$, columns indexed by $J$
\State $\tilde{\mathbf{q}}_J = \mathbf{Q}_J \x + \mathbf{q}_J$
\State $\begin{aligned} \mathbf{\lambda}^* =  \argmin_{\mathbf{\lambda} \ge \mathbf{0}} ((\mathbf{A}_{I,J})^\intercal \mathbf{\lambda} - \tilde{\mathbf{q}}_J)^\intercal ((\mathbf{A}_{I,J})^\intercal \mathbf{\lambda} - \tilde{\mathbf{q}}_J)  \end{aligned}$ (QP solve, using e.g. \cite{osqp})
\If{$(\mathbf{A}_{I,J})^\intercal \mathbf{\lambda}^* = \tilde{\mathbf{q}}_J$}
\State \textbf{return true}, $\mathbf{\lambda}^*$
\Else{}
\State \textbf{return false}
\EndIf
\end{algorithmic}
\end{algorithm}

\begin{algorithm}
\caption{Generate Local Solution Graph for QP}\label{alg:qp_solgraph}
\begin{algorithmic}[1]
\Require  \textbf{ } \begin{itemize}\itemindent=-6pt
\item[$\bullet$] QP:
    \begin{itemize}\itemindent=-16pt
        \item cost $f(\x) = \frac{1}{2} \x^\intercal \mathbf{Q} \x + \x^\intercal \mathbf{q}$
        \item feasible set $C = \{ \x: \mathbf{A} \x + \mathbf{b} \ge \mathbf{0}\}$, $\mathbf{b} \in \R{m}$
        \item decision indices $J$
    \end{itemize}
    \item[$\bullet$] Reference solution $\x^* \in \mathbb{R}^n$
\end{itemize}
\State assert $\x$ is a solution for QP, get $\mathbf{\lambda}^*$ (\cref{alg:check_qp})
\State $I_p = \{ i : (\mathbf{A} \x^* + \mathbf{b})_i = 0 \}$
\State $I_d = \{ i: \mathbf{\lambda}^*_i = 0 \}$, $I_d' = [m] \setminus I_d$
\State $I_\mathrm{strong} = I_p \cap I_d'$ 
\State $I_\mathrm{weak} = I_p \cap I_d$
\State $\tilde{S} \gets $ empty collection of polyhedral solution graph regions
\For{$\hat{I} \in\mathcal{P}(I_\mathrm{weak}) $}
\State $I_a = I_\mathrm{strong} \cap \hat{I}$, $I_a' = [m] \setminus I_a$ 
\State $\mathbf{Q}_J, (\mathbf{A}^\intercal)_J \gets $ rows of $\mathbf{Q}, \mathbf{A}^\intercal$ indexed by $J$
\State $\mathbf{A}_{I_a}, \mathbf{b}_{I_a}, \mathbf{\lambda}_{I_a},\mathbf{A}_{I_a'}, \mathbf{b}_{I_a'}, \mathbf{\lambda}_{I_a'} \gets $ rows of $\mathbf{A}, \mathbf{b}, \mathbf{\lambda}$ indexed by $I_a, I_a'$
\State $ H = \left\{  \begin{aligned}  
\x \in \R{n}, \mathbf{\lambda} \in \R{m} : \ & \mathbf{Q}_J \x = (\mathbf{A}^\intercal)_J \mathbf{\lambda} \\
&\mathbf{A}_{I_a}\x + \mathbf{b}_{I_a} = \mathbf{0} \\
&\mathbf{A}_{I_a'}\x + \mathbf{b}_{I_a'} \ge \mathbf{0} \\
&\mathbf{\lambda}_{I_a} \ge \mathbf{0},  \ \mathbf{\lambda}_{I_a'} = \mathbf{0}
\end{aligned}\right\}$
\State $V, R \gets $ vertices, rays of $H$ (vertex enumeration, using e.g. \cite{fukuda2003cddlib})
\State $V_\x, R_\x \gets$ vertices, rays projected from $\R{n} \times \R{m}$ to $\R{n}$
\State $H_\x \gets $ halfspace representation of $V_\x, R_\x$ (using e.g. \cite{fukuda2003cddlib})
\State $\tilde{S} = 
\tilde{S} \cup H_\x$
\EndFor
\State \textbf{return $S$}
\end{algorithmic}
\end{algorithm}

\Cref{alg:check_qp} is a procedure for validating that some vector $\x \in \R{n}$ is a solution to a QP defined by the cost function $f$, feasible set $C$, and decision indices $J$. Since $J \subset [n]$, in general, some elements of $\x$ will act as parameters to the QP. This algorithm fixes the primal variables $\x_J$ (elements of $\x$ indexed by $J$), and tries to identify dual variables $\lambda$ which satisfy the necessary and sufficient conditions of optimality (\ref{eq:sol_graph_QP}). If dual variables satisfying these conditions are found, they are returned as a certificate of optimality. Otherwise, it is determined that the vector $\x$ is not in the solution graph of the given QP. 

\Cref{alg:qp_solgraph} produces a set $\tilde{S}$ which approximates the full solution graph $S$ of a QP in a neighborhood around a supplied reference $\x^*$, following \cref{lemma:QP_sol}. Specifically, the computed set $\tilde{S}$ satisfied $\tilde{S} \cap \mathcal{N}_\epsilon(\x^*) = S \cap \mathcal{N}_\epsilon(\x^*)$ for some $\epsilon > 0$. This algorithm proceeds by first identifying (for the solution $\x^*$) which affine inequality constraints are \emph{strongly} active, \emph{weakly} active, or inactive, where active means the affine expression achieves its lower bound of zero. A strongly active constraint has a corresponding dual variable with a strictly positive value, whereas a weakly active constraint has an associated dual variable with value of zero. 

For every subset of weakly active constraint indices, the constraints are partitioned into two new sets. One which include the strongly active constraints and the subset of weakly active constraints, and another which contains all other constraints. This partition leads to a polyhedral region in the joint primal-dual variable space in which every element satisfies the optimality conditions for the QP with a particular assignment for the complementarity conditions. When projected from the primal-dual space into the primal space, these sets result in a polyhedral region of the QP solution graph that contains $\x$. Hence, all local polyhedral regions are found by considering all subsets of the weakly-active constraints, and then projecting the resulting polyhedral regions by first converting them to vertex representation, projecting all vertices, and then converting back to halfspace representation.

\Cref{alg:qpn_solgraph} leverages \cref{alg:qp_solgraph} to compute local representation (with respect to a reference solution $\x^*$) of the solution graph of a QP node $i$ within a QPN, following \cref{cor:sol_union_local}. It is assumed that local representations of the solution graphs for all children of node $i$ are known and provided. By \cref{lemma:QPN_sol}, the solution graphs for all nodes in a QPN, and therefore the children of node $i$, are unions of not-necessarily closed polyhedral regions. The construction of node $i$'s solution graph proceeds by considering the intersection of node $i$'s feasible set $C^i$ with the intersection of all child solution graph representations, which results in another union of polyhedral regions. For every resulting region $C^l$ (with $l$ an index enumerating these sets), the solution graph of node $i$'s QP using $C^l$ as the feasible set is found using \cref{alg:qp_solgraph}. A set $Z^{i,l}$ is then formed by taking the union of this resulting local solution graph with the complement of the set $C^l$. When the sets $Z^{i,l}$ are intersected, a set is produced which, when intersected with $\mathcal{N}_\epsilon(\x^*)$, is equivalent to those in (\ref{eq:sol_union_local}) as required. For purposes of searching for equilibrium points of QPNs, only those regions which are local to $\x^*$ are returned.

\begin{algorithm}
\caption{Generate Local Solution Graph for a QPN Node}\label{alg:qpn_solgraph}
\begin{algorithmic}[1]
\Require  \textbf{ } \begin{itemize}\itemindent=-6pt
\item[$\bullet$] QP Node $i$:
    \begin{itemize}\itemindent=-16pt
        \item cost $f^i(\x) = \frac{1}{2} \x^\intercal \mathbf{Q}^i \x + \x^\intercal \mathbf{q}^i$
        \item feasible set $C^i = \{ \x: \mathbf{A}^i \x + \mathbf{b}^i \ge \mathbf{0}\}$
        \item decision indices $J^i$
    \end{itemize}
    \item[$\bullet$] Reference solution $\x^* \in \mathbb{R}^n$
    \item[$\bullet$] Local solution graphs for child nodes: not-necessarily-closed polyhedral regions $S^{j,k}$, where $\mathcal{N}_\epsilon(\x^*) \cap S^{j} = \mathcal{N}_\epsilon(\x^*) \cap \bigcup_{k \in K^j} S^{j,k}$, for all $j$ such that $(i,j)\in E$.
    \item[] ($E$ are network edges, $K^j$ are index sets, and $\x^* \in S^{j,k}$ for all $j,k$).
\end{itemize}
\State $I \gets J^i \bigcup_{j: (i,j) \in R} J^j$ ($R$ is the  set of reachable transitions for network)
\State $l \gets 0$
\For{$\mathbf{k} \in \prod_{j: (i,j) \in E} K^{j}$}
\State $l = l + 1$
\State $C^l = C^i \bigcap_{j:(i,j) \in E} \closed{S^{j,\mathbf{k}_j}}$
% \State \textbf{assert} $\x$ is a QP solution for $f^i$, $C^l$, $J^i$ (
\State $S^{i,l} \gets$ solution graph for QP ($f^i$, $C^l$, $I$) local to $\x^*$ (\cref{alg:qp_solgraph})
\State $Z^{i,l} = S^{i,l} \cup (C^l)'$
\EndFor
\State $\tilde{S}^i = \bigcap_{l} Z^{i,l}$
\State $\tilde{S}^i := \bigcup_{k\in K^i} S^{i,k}$ (can be expressed as a union of polyhedral regions)
\State $\tilde{S}^i \gets \bigcup_{k: \x^* \in \closed{S^{i,k}}} S^{i,k}$ (exclude all non-local regions).
\State \textbf{Return $S^i$}
\end{algorithmic}
\end{algorithm}

\begin{algorithm}
\caption{Compute Nash Equilibrium among QPs}\label{alg:equilibrium}
\begin{algorithmic}[1]
    \Require  \textbf{ } \begin{itemize}\itemindent=-6pt
    \item[$\bullet$] QPs $\{\mathrm{QP}^1,...,\mathrm{QP}^N\}$. 
    \item[] $\mathrm{QP}^i$: 
        \begin{itemize}\itemindent=-16pt
            \item cost $f^i(\x) = \frac{1}{2} \x^\intercal \mathbf{Q}^i \x + \x^\intercal \mathbf{q}^i$
            \item feasible set $C^i = \{ \x: \mathbf{A}^i \x + \mathbf{b}^i \ge \mathbf{0}\}$, $\mathbf{A}^i \in \R{m^i \times n}$
            \item decision indices $J^i$, $|J^i| = n^i \le n$
        \end{itemize}
        \item candidate $\x \in \R{n}$
    \end{itemize}
    
    \State $J = \bigcup_{i \in [N]} J^i$, $J' = [n]\setminus J$
    \State $\tilde{n} = \sum_{i=1}^N n^i$, $\hat{n} = | J|$, $m = \sum_{i=1}^N m^i$
    
    \State $\mathbf{Q} = \begin{bmatrix} 
    \mathbf{Q}^1_{J^1, J} \\
    \vdots \\
    \mathbf{Q}^N_{J^N, J}
    \end{bmatrix}$, 
    $\mathbf{R} = \begin{bmatrix} 
    \mathbf{Q}^1_{J^1, J'} \\
    \vdots \\
    \mathbf{Q}^N_{J^N, J'}
    \end{bmatrix}$, 
    $\mathbf{q} = \begin{bmatrix} 
    \mathbf{q}^1_{J^1} \\
    \vdots \\
    \mathbf{q}^N_{J^N}
    \end{bmatrix}$
    
    \State $\mathbf{A} = \begin{bmatrix} 
    \mathbf{A}^1_{\cdot, J} \\
    \vdots \\
    \mathbf{A}^N_{\cdot, J}
    \end{bmatrix}$, 
    $\mathbf{\tilde{A}} = \begin{bmatrix} 
    \mathbf{A}^1_{J^1, J} & & \\
    & \ddots &\\
    & & \mathbf{A}^N_{J^N, J}
    \end{bmatrix}$
    $\mathbf{B} = \begin{bmatrix} 
    \mathbf{A}^1_{\cdot, J'} \\
    \vdots \\
    \mathbf{A}^N_{\cdot, J'}
    \end{bmatrix}$, $\mathbf{b} = \begin{bmatrix} 
    \mathbf{b}^1 \\
    \vdots \\
    \mathbf{b}^N
    \end{bmatrix}$
    
    \State $\mathbf{l} = \begin{bmatrix}
        -\mathbf{\infty}_{\hat{n}} \\ 
        -\mathbf{\infty}_{\tilde{n}} \\ 
        \mathbf{0}_m
    \end{bmatrix}$, $\mathbf{u} = \begin{bmatrix}
        +\mathbf{\infty}_{\hat{n}} \\ 
        +\mathbf{\infty}_{\tilde{n}} \\ 
        \mathbf{0}_m
    \end{bmatrix}$
    
    \State $\mathbf{M} = 
    \begin{bmatrix} 
    \mathbf{0}_{\hat{n},\hat{n}} & \mathbf{0}_{\hat{n},\tilde{n}} & \mathbf{0}_{\hat{n},m} \\ 
    \mathbf{Q} & \mathbf{0}_{\tilde{n}, \tilde{n}} & -\mathbf{\tilde{A}}^\intercal \\
    \mathbf{A} & \mathbf{0}_{m,\tilde{n}} & \mathbf{0}_{m, m}
    \end{bmatrix}
    $, $\mathbf{N} = 
    \begin{bmatrix} 
     \mathbf{0}_{\hat{n},n-\hat{n}}\\
    \mathbf{R} \\
    \mathbf{B}
    \end{bmatrix}
    $, $\mathbf{o} = 
    \begin{bmatrix} 
    \mathbf{0}_{\hat{n}} \\
    \mathbf{q} \\
    \mathbf{b}
    \end{bmatrix}$
    
    \State LMCP $ \gets (\mathbf{M}, \mathbf{N}\x_{J'} + \mathbf{o}, \mathbf{l}, \mathbf{u})$
    \State $(\mathbf{\psi}, \x_{J}, \mathbf{\lambda}) \gets $ solve LMCP (e.g. using \cite{dirkse1995path})
    \State \textbf{Return} $\x$
\end{algorithmic}
\end{algorithm}

In \cref{alg:equilibrium}, a procedure is given for computing an equilibrium among an unconnected collection of $N$ QPs. These QPs each have a cost function, feasible set, and set of decision indices, as in standard QPNs. The decision indices need not be unique. It may be that some index $j \in J^{i_1} \cap J^{i_2}$ for two indices $i_1,i_2 \in [N]$. The algorithm simply constructs a large LMCP from the optimality conditions for each of the $N$ QPs. Because the decision indices for each of the QPs are not necessarily unique, slack variables must be introduced to ensure that the matrix $\mathbf{M}$ of the LMCP remains square. In this formulation, the slack variables do not enter the optimality conditions for any of the QPs, which means that a solution to the LMCP will not exist unless there exist values of the shared decision variables which are simultaneously optimal for all players who share those decision indices. As is discussed below, this is not an issue for the situation most commonly encountered for QPNs. A thorough treatment of existence criteria for an equilibrium point to exist for these type of problems can be found in \cite{facchinei2003finite}. 

Finally, \cref{alg:eq_search} can be described, which is the main routine for computing equilibrium points of a QPN. This algorithm starts with some initial guess $\x$, and then works from the deepest nodes of the network upward, repeatedly constructing local representations of each node's solution graph, or modifying the iterate $\x$ so that it is indeed a solution to the given node's optimization problem, and then restarting the solution graph construction from the bottom nodes. This continues until a point is found which lies within the solution graphs of every node in the graph, and is hence an equilibrium point. 

\begin{algorithm}
\caption{QPN Equilibrium Search}\label{alg:eq_search}
\begin{algorithmic}[1]
\Require  \textbf{ } \begin{itemize}\itemindent=-6pt
\item[$\bullet$] QPN:
    \begin{itemize}\itemindent=-16pt
        \item Set of QP nodes $\{f^i, C^i, J^i\}_{i \in [N]}$
        \item Network Edges $E$
    \end{itemize}
    \item[$\bullet$] Initialization $\x \in \mathbb{R}^n$
    \item[$\bullet$] Topological depth mapping $\mathcal{L} = \{L_1, ..., L_D\}$
\end{itemize}
\For{$d \in \{D,...,1\}$} \label{line:restart}
    \State agreement $\gets$ \textbf{true}
    \For{$i \in L_d$}
        \State $C = C^i \bigcap_{j: (i,j) \in E} S^j_\epsilon(\x)$ (simply $C^i$ if $d=D$)
        \State Let $C := \bigcup_{l} C^{i,l}$,  ($C^{i,l}$ polyhedral)
        \If{for all $l$, $\x$ solves $QP(f^i, \closed{C^{i,l}}, J^i)$ (\cref{alg:check_qp})}
            \State $S^i_\epsilon(\x) \gets$ local solution graph $(\{f^i, C^i, J^i\}, \x, \{S^j_\epsilon(\x)\}_{j: (i,j) \in E})$ (\cref{alg:qpn_solgraph})
            \State $l_i \gets 1$
        \Else{}
            \State agreement $\gets$ \textbf{false}
            \State $l_i \gets l$ for which $\x$ does not solve $QP(f^i, C^{i,l}, J^i)$
        \EndIf
    \EndFor
    \If{agreement = \textbf{false}}
        \State $I^i = \bigcup_{j \in D^i} J^j$ ($D^i$ as in \cref{eq:Di})
        \State $\x \gets$ equilibrium for $\{ QP(f^i, C^{i,l_i}, I^i) : i \in L_d\}$ (\cref{alg:equilibrium})
        % (\cref{alg:equilibrium}) 
        \State $d \gets D$, \textbf{go to} \cref{line:restart} ($D$ as in depth of depth-mapping)
    \EndIf
\EndFor
\State \textbf{Return $\x$} 
\end{algorithmic}
\end{algorithm}

In order to define this bottom-up procedure, a \emph{topological depth mapping} for the network must be created. A topological depth mapping $\mathcal{L}$ over a QPN (more generally, a MPN) is defined as a ordered set of layers $\{L_1, ..., L_D\}$, where each layer is a set of nodes. The depth map is topological in that for all $i \in L_d$, if $(i,j) \in E$ (with $E$ being the set of edges in the network), then $j \in L_{d'}$ for some $d' > d$. Any such depth map is said to have depth $D$. Note that a multiple topological depth maps may exist for the same network, as shown in \cref{fig:depth_maps}. 

\begin{figure}[hbt]
    \centering
    \includegraphics[width=0.55\linewidth]{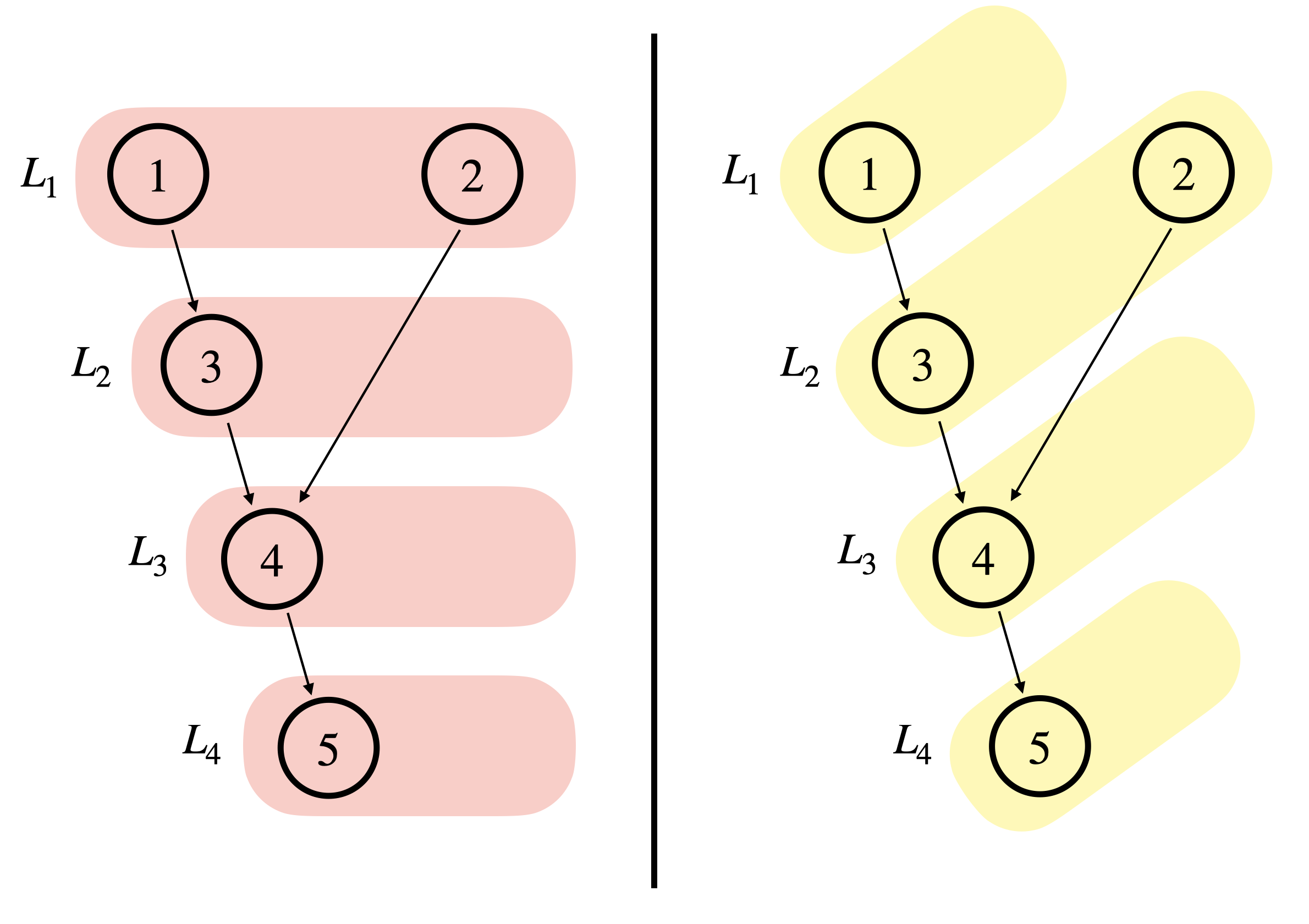}
    \caption{Two valid depth mappings for the same network.}
    \label{fig:depth_maps}
\end{figure}

Given a valid depth map $\mathcal{L}$ and an initial iterate $\x$, the procedure starts at the deepest layer $L_D$, and checks if the current iterate is a solution for the nodes in that layer or not. If it is a solution for all such nodes, then the solution graphs for each node are generated using \cref{alg:qpn_solgraph}. If not, then an equilibrium point is found for the nodes in that layer using \cref{alg:equilibrium}. Note that doing so only modifies the subset of the iterate $\x$ corresponding to the decision nodes in this deepest layer. After an equilibrium is found, the algorithm restarts, and the local solution graphs are computed for the nodes at this layer. Then, working upwards in the depth mapping, the procedure repeats. For a given layer, and for each node in the layer, the combined feasible set is generated by intersecting the node's feasible set with the intersection of local solution graphs for each of its children (which by definition, were constructed in deeper layers). These combined feasible sets are again unions of not-necessarily closed polyhedral regions. Following \cref{lem:unions}, the iterate $\x$ is checked for node optimality using the closure of each of the polyhedral regions. 

If the iterate $\x$ solves each QP node using each of the polyhedral pieces as its feasible set, then the local solution graphs for each of these nodes are constructed using \cref{alg:qpn_solgraph}, and the algorithm proceeds to the next layer above. Alternatively, if at least one of the nodes are not satisfied with the iterate, then an equilibrium must be found among the nodes at the current layer. To do this, a particular polyhedral region from the combined feasible set needs to be used for each node. The first region which results in $\x$ not being an optimum for the given node is chosen. Nodes for which $\x$ is an optimum choose such a region arbitrarily. In practice, it is possible that this procedure for choosing regions results in inconsistency. 

For example consider two nodes which share a single child node. Say that the child node solution graph consists of two regions, region $A$ and region $B$, which intersect only at the point $\x$. If $\x$ is not an optimum for node $1$ when using region $A$, and $\x$ is not an optimum for node $2$ when using region $B$, then attempting to compute an equilibrium for node $1$ with region $A$ and node $2$ with region $B$ will result in failure. Therefore, additional care must be taken to avoid such inconsistencies. This can be accomplished through careful bookkeeping, but such a procedure is omitted from \cref{alg:eq_search} for purposes of presentation. 

If an equilibrium need be computed for a given layer, then the iterate $\x$ is updated to satisfy the equilibrium conditions, and then the algorithm returns to the deepest layer $L_D$ to reconstruct representations of the solution graphs of all nodes which are valid locally to the new iterate. If an equilibrium computation is performed at some depth $d$, then by construction, the new iterate will satisfy optimality conditions for all nodes at all depths $\{D,...,d+1\}$. 

The algorithm presented follows directly from the results in \cref{sec:dev}. As will be seen in the following section, it is effective at finding equilibrium points for moderate sized, non-trivial QPNs. However, it is not without shortcomings. Significant amounts of computations can be wasted generating solution graphs of low-level nodes, only for the iterate to not satisfy optimality conditions at some high-level node. The number of polyhedral regions comprising the solution graphs of the nodes can also become immense, especially for deep QPNs. This problem may be avoidable in some instances. At an equilibrium point, all nodes in a QPN may only have a single polyhedral region in their local solution graphs. However, along the path of iterates encountered by \cref{alg:eq_search}, there may be points which result in local solution graphs with so many regions that computation is rendered completely intractable. 

Finally, there are known failure modes to the algorithm described. It is possible that cycling occurs, leading to an infinite execution. Practical implementations need to monitor for cycling and report failure to ensure the computation terminates. Furthermore, the success of the algorithm depends on successful returns from the routines described in the beginning of this section. However, numerical conditioning and floating point errors can result in some of the computations failing or providing inaccurate results, which propagate into larger errors for \cref{alg:eq_search}. 

An open-source Julia package implementing the algorithms presented in this section was created as a part of this work \cite{Laine_QPNets_jl_2024}. In the section to follow, examples of interesting QPNs are presented, and they are solved using the methods above.

\section{Examples}
\label{sec:comp_examples}
Several examples of MPNs are presented in this section. Specifically, \cref{example:ridge} is used to explicitly illustrate each of the steps in \cref{alg:eq_search}. \Cref{example:constellation_game} is used to explore how many different network configurations can be applied to the same set of mathematical program nodes, each defining a different MPN and corresponding set of equilibrium points. Finally, \cref{example:robust_avoid} is used to demonstrate that interesting problems can be naturally posed in the MPN framework.

The examples in this section are described using a tabular format designed to succinctly define the network configuration and properties of the constituent nodes. This description begins by describing the decision variables $\x$, and when useful, assigning context-specific variable names to subsets of the variables. When the network configuration is made explicit, a visual depiction of it is listed following the decision variable description. Finally, the cost function, feasible set, and set of private decision variables are listed for each of the nodes in the network.

Code for setting up each of these examples and solving them is provided in the package \cite{Laine_QPNets_jl_2024}.

\subsection{Simple Bilevel}

\begin{figure}[htb]
    \centering
    \includegraphics[width=0.95\linewidth]{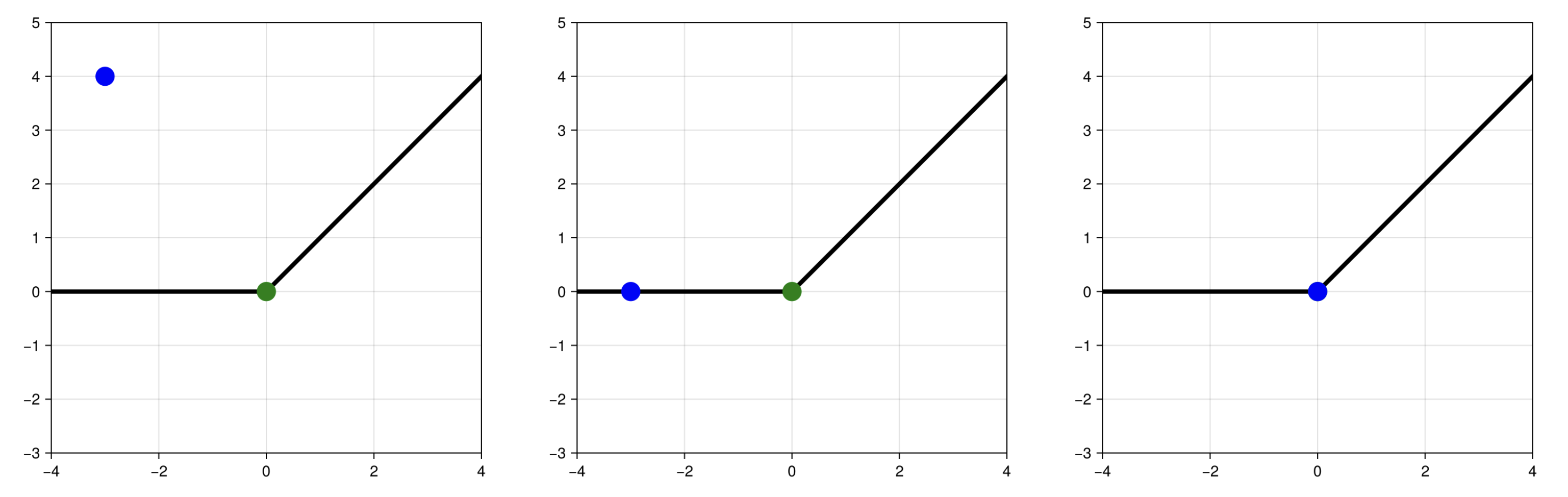}
    \caption{A depiction of the iterates (shown from left to right) that are encountered when solving for an equilibrium of \cref{example:ridge} with $\x$ initialized to $[0,0,-3,4]$.}
    \label{fig:ridge}
\end{figure}

\vspace{8pt}
\noindent\begin{minipage}\textwidth
\begin{example}[Simple Bilevel] \label{example:ridge}
    \renewcommand{\arraystretch}{1.6}
    \begin{center}
    \begin{tabular}{ |c c|  }
     \hline
     Dec. vars: & $\mathbf{x} := \left[ x_1, x_2, x_3, x_4 \right] \in \R{4}$ \\[2pt]
     Network: &
         \begin{tikzpicture}[scale=0.75,baseline=(current bounding box.center), every node/.style={circle, draw, minimum size=0.8cm,font=\scriptsize, line width=1.0}]
            \node[circle, draw] (1) at (-1.0,-0.1) {1};
            \node[circle, draw] (2) at (1.0,-0.1) {2};
            \node[draw=none] (space) at (0.0,-0.3) {};
            % \node[circle, draw] (1) at (0.0,0.0) {1};
            % \node[circle, draw] (2) at (0.0,-1.6) {2};
            % \node[draw=none] (space) at (0.0,-1.9) {};
            \graph[edges={line width=1pt}] {(1) -> (2)}; 
        \end{tikzpicture}
    \\[5pt] \hline \hline
     \multicolumn{2}{|c|}{ Node $1$: } \\\hline
     Cost: & $\frac{1}{2}(x_3-x_1)^2 + \frac{1}{2}(x_4-x_2)^2$ \\
     Feasible set: & $\{ \mathbf{x} \in \R{4} \}$ \\
     Private vars: & $x_3$ \\
     Child nodes: & \{2\} \\
     \hline
    \multicolumn{2}{|c|}{ Node $2$: } \\\hline
    Cost: & $\frac{1}{2}(x_4 - x_3)^2$ \\
     Feasible set: & $\{ \mathbf{x}: x_4 \ge 0 \}$ \\
     Private vars: & $x_4$ \\
     Child nodes: & $\{\}$ \\
     \hline
    \end{tabular}
    \end{center}
\end{example}
\vspace{\parskip}
\end{minipage}

This example describes a small bilevel quadratic program which, despite its simplicity, is useful for understanding the computational routines of \cref{sec:comp}. Since the variables $x_1$ and $x_2$ are components of $\x$ but do not appear as private variables of either node 1 or node 2, they act as parameters to the problem, and the solution graphs for each node and the resulting set of equilibrium points for the network will be defined in terms of the values of these parameters. In other words, for different initializations of these variables, different equilibrium points will result. Here, a trace of \cref{alg:eq_search} is explored using the initial point $\x = [0,0,-3,4]$. 

There is a unique depth mapping associated with this network, 
\begin{equation*}
    \mathcal{L} = \{L_1, L_2\} = \{\{1\}, \{2\}\}.
\end{equation*}
Starting with $d=2$, the initial point $\x$ is checked for optimality of node $2$. There is a single constraint in $C^2$, and it is inactive at the point $\x$. Therefore, the point can only be optimal if $\tilde{q}_J = 0$ in \cref{alg:check_qp}.  However, at $\x$, $\tilde{q}_J = 7$, and therefore it is identified that $\x$ is not in the solution graph for this node, and an equilibrium must be found. As node $2$ is the only node in layer $L_2$, this results in simply updating $x_4$ to be optimal for node $2$, i.e. $x_4 \gets 0$. The algorithm then proceeds to verify that $\x = [0,0,-3,0]$ is indeed a solution for node $2$, identifying a value of $\lambda^* = 2$.

With the new iterate $\x$ identified as a solution for all nodes in $L_2$, local representations of the solution graphs are computed. For node $2$, there is a single strongly active constraint, resulting in a single polyhedral region:
\begin{align*}
\left\{
\begin{aligned}
 \x, \mathbf{\lambda} : \ &x_4 - x_3 = \lambda, \\
    &x_4 = 0, \\
    &\lambda \ge 0.   
\end{aligned}
    \right\}
\end{align*}
Projecting this region from primal-dual space to primal space results in the local representation of the solution graph
\begin{align*}
    S^{2,1} = \left\{  \x : x_4=0, x_3 \le 0\right\}.
\end{align*}
At this point, the algorithm proceeds to level $L_1$, which contains only node $1$. Here, optimality is checked when considering the local solution graph of node $2$ as part of the feasible set for node $1$. The current iterate is not optimal. The only optimal point when considering the local representation of node $2$'s solution graph is $[0,0,0,0]$, and as such the iterate is updated to this point. 

With $\x$ updated, the algorithm returns to $L_2$ to update the representation of the solution graph for node $2$ in the vicinity of the new iterate. Now the constraint for node $2$ is weakly active, rather than strongly active. After projecting, this results in two components of the local solution graph: $S^{2,1}$ (as before) and 
\begin{align*}
    S^{2,2} = \left\{  \x : x_4=x_3, x_3 \ge 0\right\}.
\end{align*}
These two components are shown in \cref{fig:ridge} as the thick black lines. Returning to $L_1$, node $1$ is checked for optimality when using both of components of the solution graph for node $2$. The first was previously used to arrive at the current iterate, so $\x$ is indeed a solution when using that component. It is determined that $\x$ is also an optimum when considering the second component, and an equilibrium for the QPN has hence been identified. 

At this point, the algorithm could terminate without computing the solution graph for node $1$. However, for illustration purposes, its construction is documented here. The solution graph for node $1$ is first computed using the region $S^{2,1}$ as the feasible set, then using $S^{2,2}$. In both instances, both $x_3$ and $x_4$ are the decision variables. Using $S^{2,1}$ results in $S^{1,1}$, and $S^{2,2}$ results in $S^{1,2}$. Both of these sets have two polyhedral components, and are given by the following:
\begin{align*}
    S^{1,1} &= \left\{ \begin{aligned}
        \x : \ & x_3 = x_1 \\
            & x_4 = 0 \\
            & x_1 \le 0
    \end{aligned} \right\} \cup 
    \left\{ \begin{aligned}
        \x : \ & x_3 = 0 \\
            & x_4 = 0 \\
            & x_1 \ge 0
    \end{aligned} \right\} \\
    S^{1,2} &= \left\{ \begin{aligned}
        \x : \ & x_3 = 0.5(x_1+x_2) \\
            & x_4 = 0.5(x_1+x_2) \\
            & x_1+x_2 \ge 0
    \end{aligned} \right\} \cup 
    \left\{ \begin{aligned}
        \x : \ & x_3 = 0 \\
            & x_4 = 0 \\
            & x_1+x_2 \le 0
    \end{aligned} \right\}.
\end{align*}

Following \cref{alg:qpn_solgraph}, the sets $Z^{1,1}$ and $Z^{1,2}$ are constructed from $S^{1,1}$, $S^{2,1}$, $S^{1,2}$, and $S^{2,2}$.

\begin{align*}
    Z^{1,1} &= S^{1,1} \cup
    \left\{ \x: \ x_3 > 0 \right\} \cup \left\{ \x: \ x_4 > 0 \right\} \cup \left\{ \x: \ x_4 < 0 \right\}
    \\
    Z^{1,2} &= S^{1,2} \cup  \left\{ \x: \ x_4 > x_3 \right\} \cup \left\{ \x: \ x_4 < x_3 \right\} \cup \left\{ \x: \ x_3 < 0 \right\}.
\end{align*}

Finally, by intersecting $Z^{1,1} \cap Z^{1,2}$, and considering the non-emptpy sets whose closure includes $\x$, the local approximation for $S^1$ is given:

\begin{align*}
    S^1 = \left\{ \begin{aligned}
        \x: \ & x_3 = x_1 \\ & x_4 = 0 \\ &x_1 < 0
    \end{aligned} \right\} \cup \left\{ \begin{aligned}
        \x: \ & x_3 = 0.5(x_1+x_2) \\ &x_4 = 0.5(x_1+x_2) \\ &x_1+x_2 > 0
    \end{aligned} \right\} \cup \left\{ \begin{aligned}
        \x: \ & x_3 = 0 \\ &x_4 = 0 \\ &x_1 \ge 0 \\ &(x_1+x_2) \le 0
    \end{aligned} \right\}.
\end{align*}

This example highlights the essence of \cref{alg:eq_search}. Many of the steps become more involved for 

\subsection{Constellation Game}

\begin{figure}[htb]
    \centering
    \includegraphics[width=0.95\linewidth]{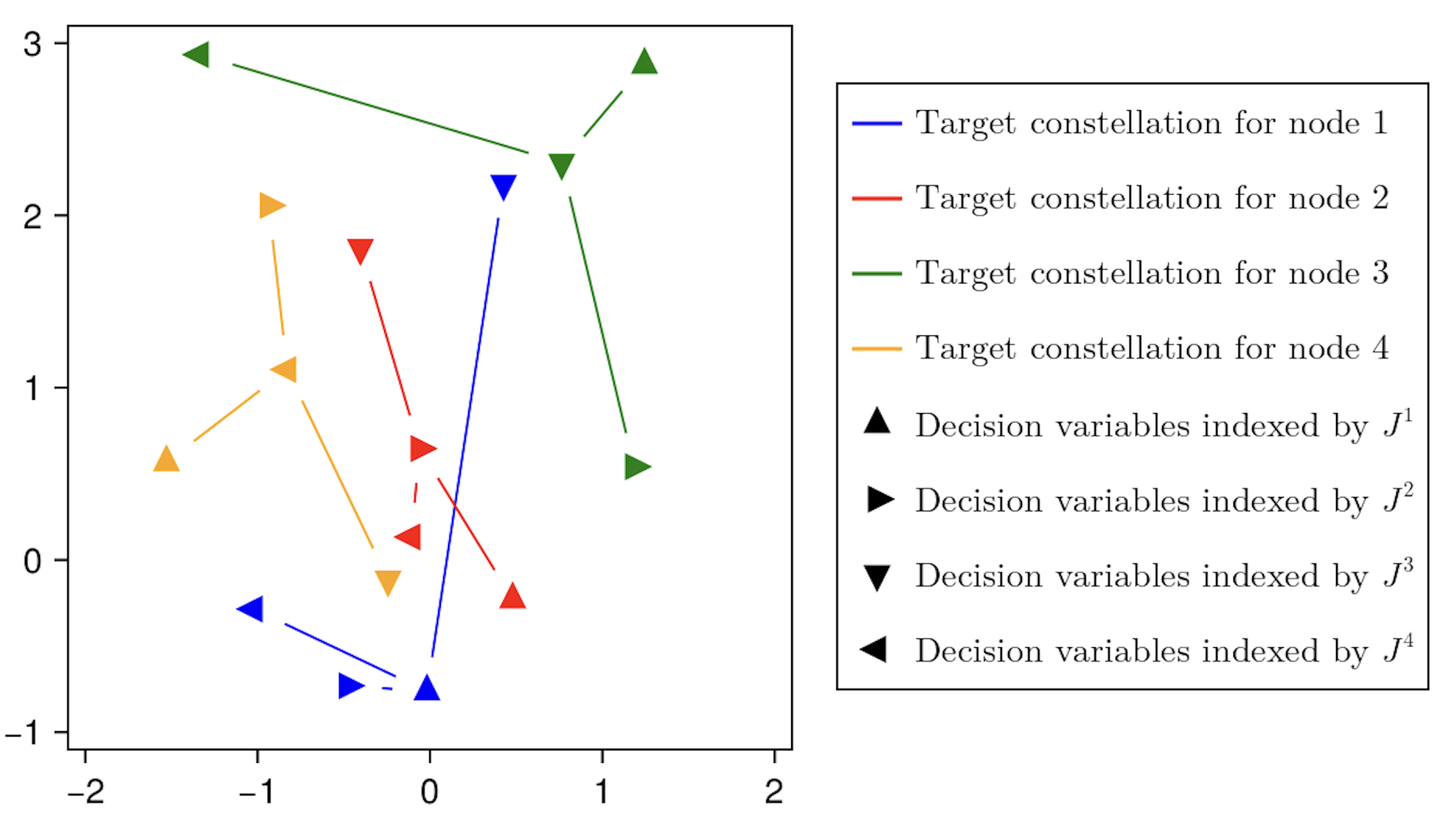}
    \caption{A depiction of each node's cost function in \cref{example:constellation_game} for randomly generated values of the parameters $\mathbf{g}^i$ and $\mathbf{r}^{i,j}$. Each constellation (depicted by a color) indicates how a particular node would decide $\mathbf{x}$ if it had unconstrained control over the entire vector. The differently-oriented triangles are used to indicate different sub-vectors of $\mathbf{x}$.
    }
    \label{fig:constellations}
\end{figure}

The following example is introduced to illustrate that the same collection of mathematical programs generally admit different equilibrium points when considered under different network configurations, and therefore each node in an MPN will have a preference on how the network is arranged.  

\vspace{8pt}
\noindent\begin{minipage}\textwidth
\begin{example}[Constellation Game] \label{example:constellation_game}
    \renewcommand{\arraystretch}{1.6}
    \begin{center}
    \begin{tabular}{ |c c|  }
     \hline
     Dec. vars: & $\mathbf{x} := \left[ \p^1, \p^2, \p^3, \p^4 \right], \ \p^i \in \R{2}$ \\[2pt] 
     Network: & See \cref{fig:four_node_example}.
    \\[5pt] \hline \hline
     \multicolumn{2}{|c|}{ Nodes $i \in \{1,2,3,4\}$: } \\\hline
     Cost: & $\|\p^i - \mathbf{g}^{i}\|_2^2 + \sum_{j=1,j\ne i}^4 \| {\p}^j - {\p}^i - \mathbf{r}^{i,j}\|_2^2$ \\
     Feasible set: & $\{ \mathbf{x}: \|\p^i\|_\infty \le 5 \}$ \\
     Private vars: & $\p^i$ \\
     Child nodes: & See \cref{fig:four_node_example}. \\
     \hline
    \end{tabular}
    \end{center}
\end{example} 
\vspace{\parskip}
\end{minipage}
% \begin{example}[Constellation Game] \label{example:constellation_game}
%     \renewcommand{\arraystretch}{1.6}
%     \begin{center}
%     \begin{tabular}{ |c c|  }
%      \hline
%      Dec. vars: & $\mathbf{x} := \left[ \p^1, \p^2, \p^3, \p^4 \right], \ \p^i \in \R{2}$ \\[2pt]
%      \hline \hline
%      \multicolumn{2}{|c|}{ Nodes $i \in \{1,2,3,4\}$: } \\\hline
%      Cost: & $\|\p^i - \mathbf{g}^{i}\|_2^2 + \sum_{j=1,j\ne i}^4 \| {\p}^j - {\p}^i - \mathbf{r}^{i,j}\|_2^2$ \\
%      Feasible set: & $\{ \mathbf{x}: \|\p^i\|_\infty \le 5 \}$ \\
%      Private vars: & $\p^i$ \\
%      Child nodes: & ? \\
%      \hline
%     \end{tabular}
%     \end{center}
% \end{example}

\SetTblrInner{rowsep=2.9pt}
\SetTblrInner{colsep=3pt}

\begin{figure}[ht!]
\centering
\begin{tblr}{|c|c|c|c|c|}
    \hline 
\csname graph22\endcsname &
\csname graph21\endcsname &
\csname graph23\endcsname &
\csname graph43\endcsname &
\csname graph29\endcsname \\
(-6.24±0.08)\% &
(-5.87±0.07)\% &
(-5.72±0.10)\% &
(-5.48±0.06)\% &
(-5.40±0.11)\% \\\hline
\csname graph19\endcsname &
\csname graph40\endcsname &
\csname graph18\endcsname &
\csname graph37\endcsname &
\csname graph45\endcsname \\
(-5.37±0.10)\% &
(-5.19±0.11)\% &
(-5.07±0.05)\% &
(-4.83±0.11)\% &
(-4.72±0.10)\% \\\hline
\csname graph17\endcsname &
\csname graph46\endcsname &
\csname graph26\endcsname &
\csname graph34\endcsname &
\csname graph35\endcsname \\
(-4.64±0.04)\% &
(-4.45±0.10)\% &
(-4.34±0.09)\% &
(-4.10±0.09)\% &
(-4.08±0.09)\% \\\hline
\csname graph33\endcsname &
\csname graph24\endcsname &
\csname graph27\endcsname &
\csname graph44\endcsname &
\csname graph41\endcsname \\
(-3.70±0.09)\% &
(-3.69±0.07)\% &
(-3.34±0.08)\% &
(-3.31±0.06)\% &
(-3.12±0.08)\% \\\hline
\csname graph25\endcsname &
\csname graph47\endcsname &
\csname graph28\endcsname &
\csname graph32\endcsname &
\csname graph20\endcsname \\
(-2.85±0.05)\% &
(-2.73±0.07)\% &
(-2.53±0.06)\% &
(-2.52±0.06)\% &
(-2.43±0.04)\% \\\hline
\csname graph6\endcsname &
\csname graph39\endcsname &
\csname graph7\endcsname &
\csname graph30\endcsname &
\csname graph5\endcsname \\
(-2.40±0.04)\% &
(-2.29±0.07)\% &
(-2.07±0.05)\% &
(-2.07±0.05)\% &
(-1.95±0.03)\% \\\hline
\csname graph38\endcsname &
\csname graph36\endcsname &
\csname graph12\endcsname &
\csname graph13\endcsname &
\csname graph31\endcsname \\
(-1.88±0.06)\% &
(-1.86±0.06)\% &
(-1.86±0.06)\% &
(-1.84±0.06)\% &
(-1.63±0.05)\% \\\hline
\csname graph42\endcsname &
\csname graph10\endcsname &
\csname graph11\endcsname &
\csname graph9\endcsname &
\csname graph8\endcsname \\
(-1.42±0.05)\% &
(-1.42±0.05)\% &
(-1.41±0.05)\% &
(-1.19±0.03)\% &
(-1.17±0.03)\% \\\hline
\csname graph16\endcsname &
\csname graph15\endcsname &
\csname graph14\endcsname &
\csname graph2\endcsname &
\csname graph3\endcsname \\
(-0.98±0.04)\% &
(-0.98±0.04)\% &
(-0.98±0.04)\% &
(-0.70±0.02)\% &
(-0.49±0.03)\% \\\hline
\csname graph4\endcsname &
\csname graph1\endcsname & & & \\
(-0.48±0.03)\% & 0\% & & & \\\hline
\end{tblr}
\caption{The 47 unique network configurations for the constellation game (\cref{example:constellation_game}), arranged in order of best to worst for the yellow-colored node. Equilibrium points were found for each configuration, for each of 50,000 random instances of the game. The average relative cost reduction (for the yellow node) compared to the Nash equilibrium (the edgeless configuration) and associated 95\% confidence interval are given for each network configuration.}
\label{fig:four_node_example}
\end{figure}
As indicated in the above table, the constellation game is comprised of four MP nodes, indexed $1$ through $4$. The decision index sets for these MPs are given by 
\begin{equation*}
   J^1 := \{1,2\}, \ \ J^2 := \{3,4\}, \ \ J^3 := \{5,6\}, \ \ J^4 := \{7,8\},
\end{equation*}
and as such the private variables $\x^i$ are simply given by the named variables $\p^i$. The feasible set for each of the nodes is the set of variables $\x$ such that their private variables $\p^i$ lie within a two-dimensional box. 

The cost function for each node is a function of the parameters $\mathbf{g}^i$ and $\mathbf{r}^{i,j}$, which are referred to, respectively, as the \emph{target locations} for $\p^i$, and the \emph{target relationships} for the pair ($\p^i$, $\p^j$). A depiction of the cost function for each of the four nodes can be seen in \cref{fig:constellations} for some given value of the target and relationship vectors.  

The children for each node (and therefore the network configuration) are left unspecified, since in the analysis below, many instances of this game will be considered, each with a different set of edges between the node. Specifically, every unique network configuration will be compared. Each of these configurations correspond to selecting a set of network edges from the super-set of all possible directed edges (excluding self edges):
 \begin{equation*}
    E^{\mathrm{super}} := \left\{ (i,j) : i \in [4], j \in [4], i \ne j \right\}
    % \begin{aligned}
    % &(1,2), (1,3), (1,4), (2,1), (2,3), (2,4),\\
    % &(3,1), (3,2), (3,4), (4,1), (4,2), (4,3)
    % \end{aligned} \right\}.
\end{equation*}
 There are 12 elements in $E^\mathrm{super}$, and therefore $2^{12}=4096$ ways to construct a network configuration for a four-node MPN in this manner. However, for this analysis, only acyclic networks are considered, since cyclic networks result in degenerate MPNs as discussed in \cref{sec:formulation}. Furthermore, some sets of edges can contain redundant edges, meaning the set of reachable transitions $\mathbf{R}$ for a network is unchanged if those edges are removed. Edge sets with redundant edges are also not considered in this analysis. 

Due to symmetry of the nodes in this game, many of the remaining MPNs can also be removed from consideration. From the perspective of any particular node, the other three nodes are interchangeable under a permutation of decision indices. For example, from the perspective of node $1$, the networks defined by edge sets $\{(1,2),(2,3)\}$ and $\{(1,4),(4,2)\}$ are identical (until the goal and relationship vectors are realized). Only network architectures in which node $1$ is uniquely oriented with respect to its interchangable peers are included. 

After removing the cyclic and redundant configurations as defined, there are only $47$ remaining configurations to be analyzed. These network configurations are shown in \cref{fig:four_node_example}, with the yellow-colored node indicating the location of node $1$. As stated, the impact that various network architectures have on the  yellow node can be used to understand the impact on all other nodes in the constellation game as well via a symmetry argument.

Having these 47 unique networks identified, a randomized analysis of the benefit that each configuration provides for a given node was performed. To accomplish this, multiple instances of the constellation game were randomly generated by sampling the parameters $\mathbf{g}^i$ and $\mathbf{r}^{i,j}$ from the standard unit multivariate normal distribution:
\begin{equation*}
   \mathbf{g}^{i} \sim \mathcal{N}(\mathbf{0}, \mathbf{I}), \ \  \mathbf{r}^{i,j} \sim \mathcal{N}(\mathbf{0}, \mathbf{I}).
\end{equation*}
For each instance, and for each of the 47 network configurations, an equilibrium was computed using \cref{alg:eq_search}, and the cost incurred for node 1 at this equilibrium point was logged. Aggregating over 50,000 random game instances, the average cost reduction compared to the Nash equilibrium (the empty edge set) for each of the other configurations was computed, along with the standard error, from which $95\%$ confidence intervals on the average cost estimate could be generated. These cost estimates are displayed for each of the network configurations in \cref{fig:four_node_example}. 

The resulting ordering of network configurations, from most to least advantageous for a given node, is rather fascinating. The Nash configuration with no edges is the worst (all other configurations provide an advantage in comparison). The best configuration, with $6.24\%$ reduction in average cost relative to the Nash configuration is the four-level network with node $1$ as the single source node. Interestingly, a four-level network with node $1$ appearing further down the hierarchy is still significantly better than many other configurations in which node $1$ is a source node, sometimes even the only source node. For example, the edge configuration $\{(2,3),(3,4),(4,1)\}$ (7th best) has a $5.19\%$ cost reduction, while the configuration $\{(1,2),(1,3),(1,4)\}$ (11th best) has a $4.64\%$ reduction. 

One conclusion that could be made from these results is that a hierarchical network configuration is advantageous for all nodes in an MPN, even for the nodes at the bottom of the hierarchy. Seemingly it is better to be at the bottom of a strongly hierarchical configuration than it is to be at the top of a configuration in which a clear hierarchy is not established. This is at least true for the constellation game example explored here. Understanding the role that network configuration has on all MPNs is a topic that should be explored in later work. 

\subsection{Robust Polyhedral Avoidance}

\vspace{8pt}
\noindent\begin{minipage}\textwidth
\begin{example}[Robust Polyhedral Avoidance] \label{example:robust_avoid}
    \renewcommand{\arraystretch}{1.6}
    \begin{center}
    \begin{tabular}{ |c c|  }
     \hline
     Dec. vars: & $\mathbf{x} := \left[ \mathbf{p}^e, \  \mathbf{u}^e, \ \mathbf{p}^{o_1}, \ \mathbf{u}^{o_1}, \ \mathbf{q}^{1}, \ \epsilon^1, \ \hdots, \ \mathbf{p}^{o_M}, \ \mathbf{u}^{o_M},\  \mathbf{q}^{M},\ \epsilon^M \right]$ \\[2pt]
     Network: &
         \begin{tikzpicture}[scale=0.75,baseline=(current bounding box.center), every node/.style={circle, draw, minimum size=1.0cm,font=\scriptsize, line width=1.0}]
            \node[circle, draw] (1) at (0.0,0.0) {1};
            \node[circle, draw] (2) at (-2.0,-1.6) {2};
            \node[draw=none] (ldots) at (-1.0, -1.6) {...};
            \node[circle, draw] (i) at (0.0,-1.6) {i};
            \node[draw=none] (rdots) at (1.0, -1.6) {...};
            \node[circle, draw] (M+1) at (2.0,-1.6) {M+1};
            \node[circle, draw] (M+2) at (-2.0,-3.2) {M+2};
            \node[draw=none] (ldots2) at (-1.0, -3.2) {...};
            \node[circle, draw] (j) at (0.0,-3.2) {j};
            \node[draw=none] (rdots2) at (1.0, -3.2) {...};
            \node[circle, draw] (2M+1) at (2.0,-3.2) {2M+1};
            \node[draw=none] (space) at (0.0,-3.5) {};
            \graph[edges={line width=1pt}] {(1) -> (2); (1) -> (i); (1) -> (M+1); (2) -> (M+2); (i) -> (j); (M+1) -> (2M+1)}; 
        \end{tikzpicture}
    \\[5pt] \hline \hline
     \multicolumn{2}{|c|}{ Node $1$: } \\\hline
     Cost: & $f(\mathbf{p}^e, \mathbf{u}^e)$ \\
     Feasible set: & $\left\{ \begin{aligned} \x: \ &\epsilon^i \ge 0 \ \forall i \in [M] \\ &\mathbf{u}^e \in U^e \end{aligned} \right\}$ \\
     Private vars: & $\mathbf{u}^e$ \\
     Child nodes: & $\{2,...,M+1\}$\\
     \hline
     \multicolumn{2}{|c|}{ Nodes $i \in \{2,... ,M+1\}$: } \\\hline
     Cost: & $\epsilon^{i-1}$ \\
     Feasible set: & $\left\{\x: \mathbf{u}^{o_{i-1}} \in U^{o_{i-1}} \right\}$ \\
     Private vars: & $ \mathbf{u}^{o_{i-1}}$\\
     Child nodes: &$\{i+M\}$\\
     \hline
     \multicolumn{2}{|c|}{ Nodes $j \in \{M+2,...,2M+1\}$: } \\\hline
     Cost: & $\epsilon^{j-M-1}$ \\
     Feasible set: & $\left\{ \begin{aligned} 
     \x : \ &\mathbf{y}^e = \mathbf{p}^e + \mathbf{u}^e + \mathbf{q}^{k}, \\ 
            &\mathbf{y}^k = \mathbf{p}^{o_{k}} + \mathbf{u}^{o_{k}} + \mathbf{q}^{k},\\
            &\mathbf{A}^e \mathbf{y}^e + \mathbf{b}^e + \mathbf{1}\epsilon^{k}  \ge \mathbf{0}, \\ 
            &\mathbf{A}^{o_{k}} \mathbf{y}^{k} + \mathbf{b}^{o_{k}} + \mathbf{1}\epsilon^{k}  \ge \mathbf{0}, \\
            & (k = j-M-1)
     \end{aligned} \right\}$ \\
     Private vars: &  $\left[ \epsilon^{j-M-1}, \mathbf{q}^{j-M-1} \right] $ \\
     Child nodes: &$\{\}$\\
     \hline 
    \end{tabular}
    \end{center}
\end{example}
\vspace{\parskip}
\end{minipage}

\begin{figure}[ht]
    \centering
    \includegraphics[width=0.95\linewidth]{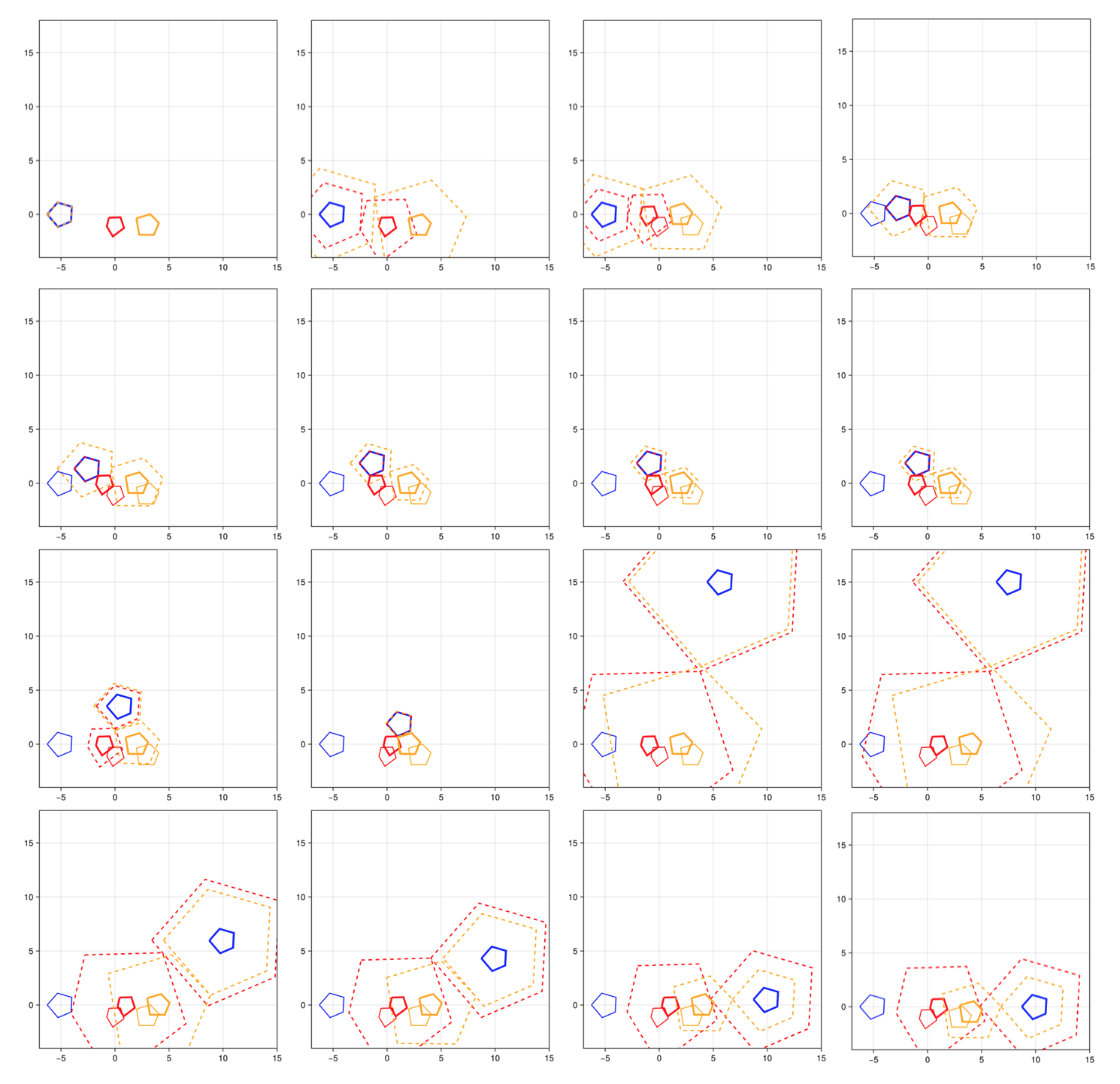}
    \caption{Visualization of each iterate of \cref{alg:eq_search} when computing an equilibrium point for \cref{example:robust_avoid}. }
    \label{fig:robust_avoid_trace}
\end{figure}

\Cref{example:robust_avoid} represents the problem of optimizing the delta ($\mathbf{u}^e$) from the initial position ($\mathbf{p}^e$) of a polygon (defined by points $\mathbf{q}$ such that $\mathbf{A}^e(\mathbf{p}^e + \mathbf{u}^e + \mathbf{q}) + \mathbf{b}^e \ge \mathbf{0}$) such that some quadratic cost $f$ is minimized, while avoiding collision with $M$ other polygons (defined by initial positions $\mathbf{p}^{o_i}$, deltas $\mathbf{u}^{o_i}$, and halfspaces given by $\mathbf{A}^{o_i}$ and $\mathbf{b}^{o_i}$) assuming the deltas of the other polygons are chosen in an adversarial manner. In this QPN, node $1$ represents the high-level optimization, nodes $i \in \{2,...,M+1\}$ represent the adversary players (one for each other polygon), and nodes $j \in \{M+2,...,2M+1\}$ represent nodes which compute the minimal expansion of two polygons so that their intersection is non-empty. Iff the expansion ($\epsilon^i$) is positive, the two polygons are not colliding. The decision variables $\mathbf{p}^e$ and $\mathbf{p}^{o_i}, i \in \{1,...,M\}$ do not appear as private variables for any of the nodes in the QPN, meaning these variables serve as parameters or ``inputs'' to the problem.  

In \cref{fig:robust_avoid_trace}, every value of $\x$ encountered when computing an equilibrium point via \cref{alg:eq_search} is displayed for a planar instance of \cref{example:robust_avoid} with two adversarial polygons (red and orange), and a quadratic cost $f$ which encourages a positive displacement for the primary polygon (blue) along the horizontal axis, and penalizes deviations from the origin along the vertical axis. Note that because this instance occurs in a planar environment, all of the variables $\mathbf{p}^e$, $\mathbf{u}^e$, $\mathbf{p}^{o_i}$, $\mathbf{u}^{o_i}$, $\mathbf{q}^i$, $i \in \{1,...,M\}$ are vectors in $\R{2}$. The set of feasible deltas are given by $\|\mathbf{u}^e\|_\infty \le 15$, and $\|\mathbf{u}^{o_i}\|_\infty \le 1$. The initial values of the variables are given by $\mathbf{p}^e = [-5,0]$, $\mathbf{p}^{o_1} = [0,-1]$, $\mathbf{p}^{o,2}=[3,-1]$, with all other variables initialized to zero. The expansions of the primary (blue) and adversarial polygons are plotted using dashed lines, and the initial configurations are shown in thin solid lines. 

As can be seen, the algorithm begins by resolving the values of $\mathbf{q}^i$ and $\epsilon^i$ for the initial values of all higher-level decision variables. The second update resolves the values of the adversarial deltas so as to maximize the intersection with the primary polygon. All remaining iterations incrementally update the value of the primary polygon delta (along with the adversarial deltas and expansion values) so as to reduce the cost function $f$ while satisfying the solution graph constraints for the lower level nodes. 

\section{Conclusion}
\label{sec:conclusion}
The concept of a Mathematical Program Network was developed. MPNs offer a framework for modeling interactions between multiple decision-makers in a manner which enables easy rearranging of the information structure or depth of reasoning of each decision process possesses. Several key results were developed to support algorithms for computing equilibrium points to MPNs, and in particular, Quadratic Program Networks. Some example networks and analyses on their solutions were presented.

The algorithms presented for solving QPNs are not without limitations. The examples presented in \cref{sec:comp_examples} were chosen so as to demonstrate the ability to solve interesting problems, while remaining tractable. When generalizing to larger problems, there are three main issues which occur. The first is that the convexity restriction for every node in the QPN is convex is limiting. Many interesting problems can be cast as QPNs involving bi-linear relationships between the decision variables of nodes at different depths in the network. However, these bi-linear relationships often result in non-convexity when paired with the solution graph constraints. Second, the numerical conditioning and floating-point errors become an issue for large-scale networks. Tolerances must be used to check equality conditions, and tuning these tolerances can be challenging. Finally, when solving for equilibria of deep QPNs, the path of iterates may encounter points where the solution graphs of the nodes contain immense numbers of polyhedral components. This problem is not fundamental---often times the solution graphs of the nodes in such networks are simple around the actual equilibrium points. However, avoiding these problematic intermediate points is difficult.

Future work will address these shortcomings, and will generalize the methods to the MPN setting, rather than focusing only on QPNs. Furthermore, broadening the framework to account for partial and imperfect information games will be an intriguing direction to consider. 

\bibliographystyle{spmpsci}      % mathematics and physical sciences
\bibliography{main}   % name your BibTeX data base
% \nocite{*}

% Non-BibTeX users please use
% \begin{thebibliography}{}
%
% and use \bibitem to create references. Consult the Instructions
% for authors for reference list style.
%
% \bibitem{RefJ}
% % Format for Journal Reference
% Author, Article title, Journal, Volume, page numbers (year)
% % Format for books
% \bibitem{RefB}
% Author, Book title, page numbers. Publisher, place (year)
% % etc
% \end{thebibliography}

\end{document}